\documentclass{amsart}

\usepackage{enumerate, amsmath, amsfonts, amssymb, amsthm, wasysym, xcolor,
url, hyperref, hypcap, frcursive,comment,stmaryrd}
\hypersetup{colorlinks=true, citecolor=darkblue, linkcolor=darkblue}

\usepackage[small]{caption}

\usepackage{listings,multicol,supertabular,tabularx,caption,graphicx}
\newcolumntype{v}{>{\centering}p}

\author{Guillaume Chapuy}
\address{CNRS, LIAFA, Universit\'e Paris Diderot -- Paris 7}
\email{guillaume.chapuy@liafa.univ-paris-diderot.fr}
\urladdr{http://www.liafa.univ-paris-diderot.fr/\~{ }chapuy/}

\author{Christian Stump}
\address{Institut f\"ur Algebra, Zahlentheorie, Diskrete Mathematik, Universit\"at Hannover}
\email{stump@math.uni-hannover.de}
\urladdr{http://homepage.univie.ac.at/christian.stump/}

\newcommand{\step}[1]{\medskip\noindent\textbf{Step #1.}} % a step in the proof

%%%%%%%%%%%%%%%%%%%%%%%%%%%%%%%%%%%%%%%%%%%%%%%%%%%%%%%%

\newcommand{\wprod}{\wr}
\newcommand{\hook}[2]{\mathfrak{h}_{#2}^{#1}}
\newcommand{\qhook}[2]{\mathfrak{qh}_{#2}^{#1}}
\newcommand{\defn}[1]{\emph{\darkblue #1}} % emphasis of a definition
\newcommand{\GL}{\operatorname{GL}} % the general linear group
\newcommand{\Sym}{\operatorname{Sym}} % the symmetric algebra
\newcommand{\CC}{\mathbb{C}} % the complex numbers
 % the real numbers
\newcommand{\Sn}{\mathcal{S}} % the symmetric group
\newcommand{\formula}{{\operatorname{FAC}}} % the symbol for the main formula

\newcommand{\fixspace}{\operatorname{Fix}} % the fix space of an element
\renewcommand{\ker}{{\operatorname{ker}}} % the kernel of a matrix
\newcommand{\one}{{1\!\!1}} % the identity matrix
\newcommand{\occ}[1]{\operatorname{occ}(#1)} % ???
\newcommand{\trace}{\mathrm{Tr}} % the trace

\newcommand{\content}{\operatorname{Content}} % the content of a partition
\renewcommand{\mod}{\operatorname{mod}} % modulo

\newcommand{\ghook}[3]{{}_#3\hook{#1}{#2}} % the indexed hook
\newcommand{\classR}{\mathcal{R}}
\newcommand{\classCox}{\mathcal{C}}

% I made commands for the notation of the partitions involved
% I keep hoping that we can find something nicer...
\newcommand{\parA}{[n],\dots}
\newcommand{\parB}{[1^n],\dots}
\newcommand{\parC}{\qhook{n}{k},\dots}
\newcommand{\parD}{\hook{n-1}{k},\dots,1,\dots}

\newcommand{\bigmultiset}[1]{\big\{\!\!\big\{ #1 \big\}\!\!\big\}} % multiset notation

\newcommand{\Hgroup}{K}
\newcommand{\h}{\kappa}

\definecolor{darkblue}{rgb}{0,0,0.7} % darkblue color
\newcommand{\darkblue}{\color{darkblue}} % darkblue command

%The \newtheorem command is used to define theorem-like environments
%that normally REQUIRE A PROOF, for example:
\newtheorem{theorem}{Theorem}[section] % 1st argument is your name for it
\newtheorem{lemma}[theorem]{Lemma}     % 2nd argument is what is printed
\newtheorem{corollary}[theorem]{Corollary}
\newtheorem{proposition}[theorem]{Proposition}
%To control the numbering sequence of these environments, see
%Lamport's book on LaTeX [2, p. 193].

%The \newnumbered command can be used to define environments or
%independent statements that DO NOT REQUIRE A PROOF. The usual ones are:
\theoremstyle{definition}
    % 1st argument is your name for it
  % 2nd argument is what is printed

\newtheorem{remark}{Remark}

 % This is usually unnumbered
% The numbering sequence of these environments can be controlled in the
% same way as for \newtheorem; see Lamport's book on LaTeX, p. 193.

% The default LMS numbering of equations in long papers is (1.1), (1.2), (2.1), etc.
% In short papers, to change the numbering to (1), (2), etc., 'uncomment' the next line.
% \simpleequations
% Otherwise, use the AMS \numberwithin command.

% TOP MATTER

\begin{document}
~\vspace{-1cm}
\title[Counting factorizations of Coxeter elements]{Counting factorizations of Coxeter elements\\ into products of reflections}
\thanks{
2000 Mathematics Subject Classification 20F55 (primary), 05E10 (secondary).
\\
We first considered the question addressed in this paper during the \emph{Formal Power Series and Algebraic Combinatorics} conference in Nagoya, Japan, in August 2012. We are
grateful to the organizers for this wonderful meeting, and C.S. acknowledges the financial
support.\\
\indent G.C. acknowledges partial support from the ERC grant StG 208471 -- \emph{ExploreMaps}, the grant 
grant ANR-08-JCJC-0011 -- \emph{IComb} and
the grant ANR-12-JS02-001-01 -- \emph{Cartaplus}.\\
\indent C.S. acknowledges support
from the German Research Foundation DFG, grant STU 563/2-1
``Coxeter-Catalan combinatorics''.\\
\indent We thank Christian Krattenthaler, Vic Reiner, and Vivien Ripoll for very stimulating discussions (in particular for their contributions to Remark~\ref{rem:automorphism}), 
Eduard Looijenga for providing us with a copy of~\cite{Deligne},
and Gunter Malle and Nathan Reading for valuable comments on previous versions of this paper.
}

\maketitle

\begin{abstract}
  In this paper, we count factorizations of Coxeter elements in well-generated complex reflection groups into products of reflections.
  We obtain a simple product formula for the exponential generating function of such factorizations, which is expressed uniformly in terms of natural parameters of the group.
  In the case of factorizations of minimal length, we recover a formula due to P.~Deligne, J.~Tits and D.~Zagier in the real case and to D.~Bessis in the complex case.
  For the symmetric group, our formula specializes to a formula of D.~M.~Jackson.
\end{abstract}

\vspace*{-15pt}
\tableofcontents

\vspace*{-20pt}
\section{Introduction}

One of the many equivalent forms of Cayley's formula~\cite{Cay1889} counting
labeled trees asserts that the number of factorizations of the long cycle
$(1,2,\dots,N)$ as a product of $(N-1)$ transpositions is \begin{align*}
\#\big\{ \tau_1 \tau_2 \cdots \tau_{N-1} = (1,2,\ldots,N) \big\} = N^{N-2}.
\end{align*} A widely considered generalization of this formula comes from the
field of map enumeration.  Instead of considering minimal factorizations, one
can as well consider \emph{higher genus factorizations}, which are
factorizations of the long cycle into $N-1+2g$ transpositions.  
The term
\emph{genus} and the letter $g$ come from the fact that this $g$ is indeed the
genus of a surface associated to a natural embedded graph
representing the factorization, see for
example~\cite{LandoZvonkine} for further details.  The following exponential
generating function identity, proved in~\cite[Corollary~4.2]{Jac1988}, provides a beautiful and
compact way to count such
factorizations,
\begin{align*} \sum_{g\geq 0}
\frac{t^{N-1+2g}}{(N-1+2g)!}\#\Big\{ \tau_1
\tau_2\dots\tau_{N-1+2g}=(1,2,\dots,N)\Big\} =
\frac{1}{N!}\left(e^{t\frac{N}{2}}-e^{-t\frac{N}{2}}\right)^{N-1}.
\end{align*} Observe that
near $t=0$ this formula has the expansion $\frac{t^{N-1}}{(N-1)!} N^{N-2}$, in
agreement with Cayley's formula.

\bigskip

Another way to generalize Cayley's formula is to count factorizations of Coxeter elements in real reflection groups, and, even more generally, in well-generated complex reflection groups.
If we replace the symmetric group by any well-generated complex reflection group $W$ of rank~$n$ with Coxeter number $h$, the number of factorizations of a fixed Coxeter element $c \in W$ into a product of~$n$ reflections is given by the formula
\begin{align}\label{eq:DTZ}
  \#\big\{ \tau_1 \tau_2 \dots \tau_n = c \big\} = \frac{n!}{|W|}h^n.
\end{align}
In the case of real reflection groups, this formula was proved in the early 1970's in a letter from P.~Deligne to E.~Looijenga~\cite{Deligne} crediting discussions with J.~Tits and D.~Zagier. The remaining cases were then proved by D.~Bessis in~\cite[Proposition~7.5]{Bes2007}, using the geometry of braid groups. Formula~\eqref{eq:DTZ} was as well rediscovered for real reflection groups in the context of the noncrossing partition lattice, see~\cite[Proposition~9]{Chapoton}, \cite[Corollary~3.6.10]{Armstrong}, and \cite[Theorem~3.6]{Reading}. 

For the symmetric group $\Sn_N$ the rank is given by $n=N-1$ and the Coxeter number is $h=N$, we thus again get back Cayley's formula.

\medskip

In this paper, we provide a uniform generalization of both results by counting \lq\lq higher genus\rq\rq\ factorizations of Coxeter elements in well-generated complex reflection groups into products of reflections. The main result is the following identity for their exponential generating function.
We refer to Section~\ref{sec:background} for the definitions used here.
\begin{theorem}\label{thm:main}
Let $W$ be an irreducible well-generated complex reflection group of rank~$n$.
Let $c$ be a Coxeter element in $W$, let $\classR$ be the set of all reflections in~$W$, and let $\classR^*$ be the set of all reflecting hyperplanes. Define
\begin{align*}
  \formula_W(t) := \sum_{\ell\geq 0} \frac{t^\ell}{\ell!}
    \#\Big\{ (\tau_1,\tau_2,\dots,\tau_\ell)\in \mathcal{R}^\ell\ ,\ \tau_1 \tau_2 \dots \tau_\ell = c \Big\}
\end{align*}
to be the exponential generating function of factorizations of $c$ into a product of reflections.
Then $\formula_W(t)$ is given by the formula
\begin{align}
  \formula_W(t)  =
    \frac{1}{|W|}\left(e^{t|\classR|/n}-e^{-t|\classR^*|/n}\right)^{n}. \label{eq:coxetergenus}
\end{align}
\end{theorem}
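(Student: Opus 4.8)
The plan is to use the Frobenius method: rewrite the count as a sum over the irreducible characters of $W$ and then evaluate that sum using Springer's theory of regular elements.

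First I would set $z:=\sum_{r\in\classR}r\in\CC[W]$; since $\classR$ is a union of conjugacy classes, $z$ is central, hence acts on the irreducible module with character $\chi$ by a scalar $\lambda_\chi=\chi(1)^{-1}\sum_{r\in\classR}\chi(r)$. As the number of tuples in $\classR^\ell$ with product $c$ is the coefficient of $c$ in $z^\ell$, a standard computation with the central idempotents of $\CC[W]$ (the Frobenius formula) gives
\begin{align*}
  \formula_W(t)\;=\;[c]\,e^{tz}\;=\;\frac1{|W|}\sum_{\chi\in\mathrm{Irr}(W)}\chi(1)\,\overline{\chi(c)}\,e^{t\lambda_\chi},
\end{align*}
where $[c]$ extracts the coefficient of $c\in W$. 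Next I would use the numerology $|\classR|+|\classR^*|=nh$, valid for well-generated $W$ (it follows from $d_i+d^*_{n+1-i}=h$), to rewrite the right-hand side of \eqref{eq:coxetergenus} as $\tfrac1{|W|}e^{t|\classR|}(1-e^{-th})^{n}=\tfrac1{|W|}\sum_{j=0}^n(-1)^j\binom nj e^{t(|\classR|-jh)}$. Comparing the two expressions and using linear independence of the exponentials $t\mapsto e^{t\alpha}$ reduces Theorem~\ref{thm:main} to proving: (a) whenever $\chi(c)\ne 0$ one has $\lambda_\chi=|\classR|-jh$ for some integer $j\in\{0,\dots,n\}$; and (b) for each such $j$, $\sum_{\chi:\lambda_\chi=|\classR|-jh}\chi(1)\,\overline{\chi(c)}=(-1)^j\binom nj$.

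For (a) and (b) I would bring in two facts. A Coxeter element $c$ is a $\zeta$-regular element in Springer's sense, with $\zeta=e^{2\pi\mathrm i/h}$, having eigenvalues $\zeta^{d_1-1},\dots,\zeta^{d_n-1}$ on the reflection representation $V$ and cyclic centraliser $\langle c\rangle$ of order $h$; Springer's trace formula then evaluates the character value at $c$ as a fake degree, $\overline{\chi(c)}=f_\chi(\zeta)$ (equivalently $\chi(c)=f_\chi(\zeta^{-1})$), where $f_\chi(q)=\sum_i\langle V_\chi,\CC[V]_{W,i}\rangle q^i$ is the graded multiplicity of $\chi$ in the coinvariant algebra, so that in particular $\chi(c)=0$ once $f_\chi(\zeta)=0$. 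The scalar $\lambda_\chi$, which is \emph{not} a function of $f_\chi$ alone, I would compute from the identity $\lambda_\chi=-|\classR^*|+\chi(1)^{-1}\sum_{H\in\classR^*}e_H\dim V_\chi^{W_H}$, where $W_H$ is the cyclic pointwise stabiliser of $H$ and $e_H=|W_H|$; for $\chi=\bigwedge^j V$ this returns $\lambda_\chi=|\classR|-jh$ at once, and (a) amounts to showing that whenever $f_\chi(\zeta)\ne 0$ the integer $\sum_H e_H\dim V_\chi^{W_H}$ is a multiple of $h\,\chi(1)$ with quotient in $\{0,\dots,n\}$. Granting (a), I would then read (b) as the statement that, after grouping characters by the common value of $\lambda_\chi$, the weighted sum collapses layer by layer onto the single exterior power $\bigwedge^j V$ (whose contribution is $(-1)^j\binom nj=(-1)^j\dim\bigwedge^j V$); the Molien–Chevalley identity $\sum_\chi\chi(g)f_\chi(q)=\prod_i(1-q^{d_i})/\det_V(1-qg^{-1})$ and its behaviour as $q\to\zeta$ should organise this, with the trivial and determinant characters covering the boundary layers $j=0,n$, and the limit $t\to0$ recovering formula~\eqref{eq:DTZ} as a consistency check.

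The hard part, I expect, is precisely step (a) together with the collapse in (b): controlling the joint behaviour of the scalar $\lambda_\chi$ and the character value $\chi(c)=f_\chi(\zeta^{-1})$ uniformly across \emph{all} irreducible well-generated complex reflection groups. Making this grouping identity genuinely type-free is the crux; it may require the Lehrer–Springer refinement of Springer's theorem, and possibly a case analysis over the classification for the exceptional groups and the series $G(e,e,n)$.
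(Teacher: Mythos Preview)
Your Frobenius setup is correct and matches the paper exactly: the identity
\[
\formula_W(t)=\frac{1}{|W|}\sum_{\chi\in\mathrm{Irr}(W)}\chi(1)\,\chi(c^{-1})\,e^{t\lambda_\chi},\qquad \lambda_\chi=\widetilde\chi(R),
\]
is the paper's equation~\eqref{eq:frobeniusZt}, and the binomial rewriting of the target is also how the paper organises the final simplifications. The divergence begins with your reduction to (a) and (b).

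\textbf{Claim (a) is false.} It is \emph{not} true that every irreducible $\chi$ with $\chi(c)\neq 0$ has $\lambda_\chi\in\{|\classR|-jh:0\le j\le n\}$. Already for $W=H_3$ (so $n=3$, $h=10$, $|\classR|=15$, admissible set $\{15,5,-5,-15\}$) the two $4$-dimensional characters have $\chi(c)=\pm 1$ but $\chi(R)=0$, hence $\lambda_\chi=0\notin\{15,5,-5,-15\}$. What actually happens is that these two contributions \emph{cancel} in the Frobenius sum. So the correct reduction is a single grouped identity: for every real number $\alpha$,
\[
\sum_{\chi:\lambda_\chi=\alpha}\chi(1)\,\chi(c^{-1})=
\begin{cases}
(-1)^j\binom{n}{j}&\text{if }\alpha=|\classR|-jh\text{ for some }0\le j\le n,\\
0&\text{otherwise.}
\end{cases}
\]
Your formula $\lambda_\chi=-|\classR^*|+\chi(1)^{-1}\sum_H e_H\dim V_\chi^{W_H}$ is correct and does give $\lambda_{\wedge^j V}=|\classR|-jh$, but this only shows that the exterior powers sit in the right layers, not that the remaining characters with $\chi(c)\neq0$ either land in those layers or cancel among themselves. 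Springer's fake-degree evaluation $\chi(c^{-1})=f_\chi(\zeta)$ and the Molien--Chevalley identity give you control over $\chi(c)$, but $\lambda_\chi$ is governed by the decomposition of $\chi$ under the \emph{parabolic} subgroups $W_H$, which is independent information; there is no known mechanism that couples the two uniformly.

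\textbf{What the paper actually does.} The paper makes no attempt at a uniform argument. After deriving~\eqref{eq:frobeniusZt} it proceeds entirely through the Shephard--Todd classification: direct computation for $C_r$ and $I_2(r)$; explicit listing of the characters with $\chi(c^{-1})\neq 0$ for $\Sn_n$, $G(r,1,n)$, $G(r,r,n)$ (hooks, ``positioned'' hooks, quasi-hooks, etc.), evaluation of their dimensions, $\chi(c^{-1})$, and $\widetilde\chi(R)$ by Murnaghan--Nakayama and content formulas, followed by a binomial collapse; and computer verification from the {\tt Chevie} character tables for the $26$ well-generated exceptional types. The authors explicitly remark that even for the minimal-length case~\eqref{eq:DTZ} no classification-free proof is known. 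So your closing caveat (``may require a case analysis'') is not a loose end but the entire content of the argument; the Springer-theoretic scaffolding you propose does not currently avoid it.
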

The Coxeter number $h$ is given by $(|\classR|+|\classR^*|)/n$, see Formula~\eqref{eq:nhRR} in Section~\ref{sec:background} below. Therefore the expansion of \eqref{eq:coxetergenus} near $t=0$ gives back Formula~\eqref{eq:DTZ}. Moreover, in the case of real reflection groups one has $|\classR|=|\classR^*|$, so the exponents in the righthand side of~\eqref{eq:coxetergenus} are given by $th/2$ and $-th/2$. For real reflection groups, Theorem~\ref{thm:main} has thus the following form.
\begin{corollary}\label{cor:Coxeter}
  Let $W$ be an irreducible real reflection group of rank~$n$ and Coxeter number $h$. 
  Then $\formula_W(t)$ is given by the formula
  \begin{align}\label{eq:corollary}
  \formula_W(t) = \frac{1}{|W|}\left(e^{th/2}-e^{-th/2}\right)^{n}.
  \end{align}
\end{corollary}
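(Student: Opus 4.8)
The plan is to obtain Corollary~\ref{cor:Coxeter} as a direct specialization of Theorem~\ref{thm:main} to the real case, so the entire task reduces to comparing the two exponents appearing in~\eqref{eq:coxetergenus}. The single fact I need is that for a real reflection group the number of reflections equals the number of reflecting hyperplanes, namely $|\classR| = |\classR^*|$.

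To establish this, I would recall that in a real reflection group every reflection is an involution: it acts as the orthogonal reflection fixing a hyperplane pointwise and negating the orthogonal line. Consequently the map sending a reflection to its fixed hyperplane is a bijection from $\classR$ onto $\classR^*$, its inverse sending a reflecting hyperplane $H$ to the unique orthogonal reflection across $H$. This is precisely the point at which the real hypothesis is used: in a general complex reflection group a single hyperplane may carry several reflections of higher order, so the two sets genuinely differ. Hence $|\classR| = |\classR^*|$ in the real case.

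Combining this equality with the relation $h = (|\classR| + |\classR^*|)/n$ recorded immediately after Theorem~\ref{thm:main}, I obtain $2|\classR| = nh$, whence $|\classR|/n = |\classR^*|/n = h/2$. Substituting these common values for the two exponents into the right-hand side of~\eqref{eq:coxetergenus} turns it into $\frac{1}{|W|}\bigl(e^{th/2} - e^{-th/2}\bigr)^{n}$, which is exactly the claimed formula~\eqref{eq:corollary}.

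Since every step after the bijection is an immediate substitution, I do not expect a serious obstacle here; the only point deserving care is the identification $\classR \leftrightarrow \classR^*$, and I would phrase it so that its failure for genuinely complex groups is manifest, thereby underlining why the symmetric exponent $\pm th/2$ is special to the real setting.
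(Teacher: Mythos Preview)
Your proof is correct and follows essentially the same route as the paper: the paper simply notes that in the real case $|\classR|=|\classR^*|$, combines this with $nh=|\classR|+|\classR^*|$ to get exponents $\pm th/2$, and substitutes into Theorem~\ref{thm:main}. You supply a bit more justification for the equality $|\classR|=|\classR^*|$ than the paper does, but the argument is otherwise identical.
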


\begin{remark}
\label{rem:automorphism}
  There are several conventions in the literature for the definition of Coxeter elements in well-generated complex reflection groups. 
  For the definition we use here, there might be several conjugacy classes $\classCox_\zeta$ of Coxeter elements for the various primitive $h$-th roots of unity $\zeta$. One of these conjugacy classes corresponds to the Coxeter elements in the sense of the more restrictive definition used for example in~\cite{Bes2007}. Notice that, strictly speaking, the definition of $\formula_W(t)$ depends a priori on the conjugacy class of the chosen Coxeter element.
  Therefore, Theorem~\ref{thm:main} implies as well that the number of factorizations is independent of this choice, which is why we suppress the information of the conjugacy class in the notation $\formula_W(t)$. 
  This seems to illustrate a more general phenomenon, namely that enumerative properties of Coxeter elements seem to be valid for the more general definition.
  For the infinite families of matrix groups defined in Section~\ref{sec:background}, this phenomenon can easily be explained by the existence of automorphisms of the group, inherited from automorphisms of the base field $\mathbb{Q}[\zeta]$, that preserve reflections and send one class of Coxeter elements to another. For exceptional groups, however, we do not know of such a simple explanation\footnote{In the same vein, we observe that the various choices of Coxeter elements lead to isomorphic non-crossing partition lattices (see e.g.~\cite{BR2007} for the definition). For the infinite families, this follows from the same argument as above, whereas we have used a computer to verify this fact for the exceptional groups.}.
  We refer the reader to Section~\ref{sec:background} for more details on our definition and for a comparison with other definitions.
  Finally, we want to emphasize again that Theorem~\ref{thm:main} is \emph{in particular} true with the more restrictive definition of Coxeter elements.
\end{remark}
\begin{remark}
  There is no loss of generality in only considering irreducible
groups in Theorem~\ref{thm:main} and Corollary~\ref{cor:Coxeter}.
  This is meant in the following sense.
  Let~$W$ be a well-generated reducible complex reflection group and
write $W=W_1\times \dots \times W_k$, where the~$W_i$ are the
irreducible components.
  The Coxeter elements in~$W$ are, by definition, the product of the
Coxeter elements of each $W_i$, and the set of reflections in~$W$ is
the union of the set of reflections in all $W_i$, and one thus obtains
immediately that $\formula_W(t)$ can be computed by applying
Theorem~\ref{thm:main} to all its irreducible components,
  \[
    \formula_W(t) = \formula_{W_1}(t) \cdots \formula_{W_k}(t).
  \]
\end{remark}
\begin{remark}
Our proof of Theorem~\ref{thm:main} uses the classification of complex reflection groups. We first treat the case of the infinite families, and then check the finitely many remaining exceptional cases. This is possible since one can encode the a priori infinite sum $\formula_W(t)$ as a finite sum over all irreducible characters, see Formula~\eqref{eq:frobeniusZt} below. Of course, it is natural to hope for a uniform proof of this theorem, i.e., a proof that does not rely on the classification. However, we point out that even in the simpler case of Formula~\eqref{eq:DTZ}, whose proof goes back to~\cite{Deligne}, only a uniform recurrence formula is known that can be used to prove this formula case-by-case. Similarly, multiple results in~\cite{Chapoton,Armstrong,Reading,Bes2007,BR2007} rely on the classification either by directly using case-by-case analysis, or by relying on the counting formula for noncrossing partitions for which still no uniform proof is known either.
\end{remark}

In Section~\ref{sec:background}, we recall needed background on complex reflection groups. In Section~\ref{sec:dihedralgroup}, we prove the main theorem for the cyclic groups and the dihedral groups by explicit computations. In Section~\ref{sec:representationtheory}, we recall a general approach to the enumeration of factorizations in groups via representation theory. In Section~\ref{sec:classicaltypes}, we prove the main theorem for the infinite families of well-generated complex reflection groups. In Section~\ref{sec:exceptionaltypes}, we then present computer verifications for the exceptional well-generated complex reflection groups. Since we were not able to provide references for explicit descriptions of the irreducible
characters of the infinite families of irreducible well-generated complex reflection groups, we describe them explicitly in Appendix~\ref{app:A}.
In the final Appendix~\ref{app:exceptionaltypes}, we provide parts of the data that we have used in the computer verification of the main theorem for exceptional groups, namely the evaluations of the irreducible characters of exceptional well-generated complex reflection groups at reflections and at one conjugacy class of Coxeter elements.

\section{Background on complex reflection groups}
\label{sec:background}

In this section, we recall some background on complex reflection groups.
All these results can be found for example in~\cite{LT2009} to which we also refer for further details.
We moreover adapt some notation from~\cite{BR2007}.

\medskip

Let $V = \CC^n$ be a complex vector space of dimension~$n$.
A \defn{(complex) reflection} is a linear transformation of $V$ that has finite order and whose fix space $\fixspace(w) := \ker(\one - w)$ is a hyperplane in $V$.
Such a hyperplane is called a \defn{reflecting hyperplane}.
A \defn{complex} (or \defn{unitary}) \defn{reflection group} $W$ is a finite subgroup of $\GL(V)$ generated by reflections.
We denote the set of reflections in~$W$ by~$\classR$ and the set of corresponding reflecting hyperplanes by $\classR^*$.
The space $V$ and the action of $W$ on $V$ are called \defn{reflection} (or \defn{natural}) \defn{representation} of~$W$.
We say that $W$ is \defn{irreducible} if this representation is irreducible, i.e. if no proper subspace of $V$ is stable under the action of~$W$.
In this case, the dimension~$n$ of $V$ is called the \defn{rank} of~$W$.
Throughout this paper, we assume that $W$ is irreducible. 
We say that two complex reflection groups are \defn{isomorphic} if they are isomorphic as abstract groups and if their reflection representations are isomorphic.

G.C.~Shephard and J.A.~Todd classified complex reflection groups in~\cite{ST1954}.
They moreover used this classification to show that when $W$ acts on the symmetric algebra $S = \Sym(V^*) \cong \CC[x_1,\dots,x_n]$, its ring of invariants $S^W$ is again a polynomial algebra, generated by homogeneous polynomials $f_1,\dots,f_n$ of uniquely determined degrees $d_1 \leq \dots \leq d_n$. These are called the \defn{degrees} of $W$. This result was as well proved by C.~Chevalley in~\cite{Che1955}.

It turns out that there is an equivalent way to define the degrees using the \defn{coinvariant algebra} $S/{\langle \mathbf{f} \rangle}$, where $\langle \mathbf{f} \rangle = \langle f_1,\ldots,f_n \rangle = S_+^W$ is the ideal in $S$ generated by all invariants without constant term. Both Shephard and Todd \cite{ST1954} and Chevalley \cite{Che1955} showed that $S/{\langle \mathbf{f} \rangle}$ carries the regular representation of $W$.
Thus, $S/{\langle \mathbf{f} \rangle}$ contains exactly $k$ copies of any irreducible $W$-representation $U$ of dimension $k$. In particular $S/{\langle \mathbf{f} \rangle}$ contains~$n$ copies of $V$.
The multiset of \defn{$U$-exponents} $e_1(U),\ldots,e_k(U)$ is given by the degrees of the homogeneous components of $S/{\langle \mathbf{f} \rangle}$ in which these $k$ copies of $U$ occur.
It is known, see e.g.~\cite[Section~4.1]{LT2009}, that the degrees of $W$ are uniquely determined by saying that the $V$-exponents are equal to $d_1-1,\ldots,d_n-1$.
This characterization has the advantage that one can as well define the \defn{codegrees} $d_1^* \geq \ldots \geq d_n^*$ by saying that the $V^*$-exponents are given by $d_1^*+1,\ldots,d_n^*+1$.
It is well known that the degrees and the codegrees determine the number of reflections and the number of reflecting hyperplanes in $W$,
\begin{align}
  |\classR| = (d_1-1) + \ldots + (d_n-1) \qquad
  |\classR^*| = (d^*_1+1) + \ldots + (d^*_n+1), \label{eq:reflectionshyperplanes}
\end{align}
see~\cite[Theorem~4.14(ii) and Appendix~C,~Section~1.2]{LT2009}.
As in~\cite[Remark~6.13]{Mal99}, we say that $W$ is \defn{well-generated} if the following two equivalent properties hold,
\begin{enumerate}[(i)]
  \item $W$ is generated by~$n$ reflections,
  \item the degrees and the codegrees satisfy $d_i + d_i^* = d_n$. \label{eq:degreecodegree}
\end{enumerate}
The equivalence of these two properties was observed in~\cite{OS1980}, using the Shephard-Todd classification.
For well-generated complex reflection groups, the \defn{Coxeter number} $h$ is defined to be the largest degree $d_n$ of $W$.
Observe that \eqref{eq:degreecodegree} together with the counting formulas in \eqref{eq:reflectionshyperplanes} yields
\begin{align}
  |R|+|R^*| = nh. \label{eq:nhRR}
\end{align}
An element $c$ in a complex reflection group $W$ is called \defn{regular} if it has an eigenvector lying in the complement of the reflecting hyperplanes for $W$ and furthermore $\zeta$-\defn{regular} if this eigenvector may be taken to have eigenvalue~$\zeta$.
In this case, the multiplicative order $d$ of $\zeta$ is called a \defn{regular number} for~$W$.
An integer~$d$ is a regular number if and only if it divides as many degrees as codegrees, see e.g.~\cite[Theorem~11.28]{LT2009}.
If~$W$ is well-generated, $d_n = h$ and $d^*_n = 0$ are the only degrees and codegrees that are integer multiples of~$h$.
Thus,~$h$ is always a regular number in this situation.
For any primitive $h$-th root of unity $\zeta$ there thus exists a regular element $c_\zeta \in W$ with eigenvalue $\zeta$, see~\cite[Remark~11.23]{LT2009}.
Any such element is called \defn{Coxeter element}.
For any $\zeta$, all $\zeta$-regular elements are $W$-conjugate, see~\cite[Corollary~11.25]{LT2009}.
Thus, the class $C_\zeta$ of Coxeter elements for a particular primitive $h$-th root of unity $\zeta$ is closed under conjugation.
Nonetheless, observe that according to our definition, there are in general more than a single conjugacy class of Coxeter elements, since the conjugacy classes for the various primitive $h$-th roots of unity may differ.
For comparison, we notice that the paper~\cite{Bes2007} defines Coxeter elements as $e^{\frac{2i\pi}{h}}$-regular elements, i.e. elements of the class $C_{\zeta}$ with $\zeta=e^{\frac{2i\pi}{h}}$. Therefore all Coxeter elements in the sense of \cite{Bes2007} are also Coxeter elements with our definition, but the converse is not true in general.

To check that an element of $W$ is a Coxeter element, we will use several times the following necessary and sufficient condition.
\begin{proposition}
  An element $w \in W$ is a Coxeter element if and only if it has an eigenvalue that is a primitive $h$-th root of unity.
\end{proposition}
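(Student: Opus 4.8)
The plan is to prove the two implications separately; the forward one is immediate from the definitions, and the reverse one rests on Springer's eigenspace theory together with the classification-based fact, recalled in Section~\ref{sec:background}, that the Coxeter number of a well-generated group divides only a single degree.

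For the implication ``$w$ a Coxeter element $\Rightarrow$ $w$ has a primitive $h$-th root of unity as an eigenvalue'', I would simply unwind the definition: a Coxeter element is a $\zeta$-regular element $c_\zeta$ for some primitive $h$-th root of unity $\zeta$, and being $\zeta$-regular means in particular that $c_\zeta$ has an eigenvector (in fact one lying off every reflecting hyperplane) with eigenvalue $\zeta$. Hence $\zeta$, a primitive $h$-th root of unity, is an eigenvalue of $c_\zeta$, and nothing further is required.

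For the converse, suppose $w \in W$ has an eigenvalue $\zeta$ which is a primitive $h$-th root of unity, and let $a(h) := \#\{ i : h \mid d_i \}$. Since $W$ is well-generated we have $d_i \le d_n = h$ for all $i$, and, as recalled in Section~\ref{sec:background}, $d_n = h$ is the \emph{only} degree that is an integer multiple of $h$; hence $a(h) = 1$. On the other hand, by Springer's theory of regular elements (see \cite[Chapter~11]{LT2009}), for every $u \in W$ the $\zeta$-eigenspace $\ker(u - \zeta\cdot\mathrm{id})$ has dimension at most $a(h)$, with equality if and only if $u$ is $\zeta$-regular --- here one uses that $h$ is a regular number, which is already known. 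Since $\zeta$ is an eigenvalue of $w$, the space $\ker(w - \zeta\cdot\mathrm{id})$ has dimension at least $1 = a(h)$, so its dimension equals $a(h)$; therefore $w$ is $\zeta$-regular and hence a Coxeter element.

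The logical content of the proof is thus very short, and the real substance lies entirely in the two imported facts, which I expect to be the only delicate points: the statement that $h$ divides exactly one degree uses the Shephard--Todd classification (it is recorded in the background section), and the bound on the dimension of $\zeta$-eigenspaces, with its equality case characterizing $\zeta$-regular elements, is Springer's theorem on regular elements. In the write-up I would take care to cite precisely the ``maximal $\zeta$-eigenspace $\Longleftrightarrow$ $\zeta$-regular'' part; everything else is a one-line comparison of dimensions.
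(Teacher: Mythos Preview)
Your proof is correct and takes a genuinely different route from the paper's. The paper argues by counting: using \cite[Theorem~11.24(iii)]{LT2009} it computes $|\classCox_\zeta| = d_1 \cdots d_{n-1}$, and via the Pianzola--Weiss polynomial \cite[Corollary~10.39]{LT2009} it shows that the number of elements of $W$ having $\zeta$ as an eigenvalue is also $d_1 \cdots d_{n-1}$; since $\classCox_\zeta$ is contained in the latter set and both have the same cardinality, they coincide. Your argument instead appeals to Springer's eigenspace theory directly: since $a(h)=1$, any nonzero $\zeta$-eigenspace is already of maximal dimension, and because $h$ is regular, maximal $\zeta$-eigenspaces contain regular vectors. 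Both proofs rest on the same key input $a(h)=1$ (equivalently $d_n > d_i$ for $i<n$), but yours is more structural and bypasses the explicit enumeration, while the paper's approach yields as a byproduct the exact count $d_1\cdots d_{n-1}$ of elements admitting a given primitive $h$-th root of unity as eigenvalue. One small remark: the implication you actually need is ``$\dim V(w,\zeta)=a(h)\Rightarrow w$ is $\zeta$-regular'', which follows from the $W$-conjugacy of maximal $\zeta$-eigenspaces together with the existence of one such eigenspace containing a regular vector; when writing this up it would be worth pinpointing the precise statement in \cite{LT2009} rather than citing Chapter~11 globally.
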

This result is well known to specialists, but we have not found it explicitly in the literature in the generality discussed here.
For real groups, a proof can be found for example in~\cite[Theorem~32-2C]{Kan2001}. For completeness, we provide a proof of the general case that follows exactly the same lines. The arguments are very similar to the ones used in the proof of~\cite[Theorem~11.15]{LT2009}.
\begin{proof}
  Since $W$ is irreducible and well-generated, we have
  \begin{align}
    h = d_n > d_{n-1},\ldots,d_2,d_1. \label{eq:hismax}
  \end{align}
  Fix now $\zeta$ to be a primitive $h$-th root of unity. We have already seen that there exists a regular element $c_\zeta \in W$ with eigenvalue $\zeta$, and that its conjugacy class $\classCox_{\zeta}$ is the conjugacy class of all $\zeta$-regular elements in $W$.
  It follows from~\cite[Theorem~11.24(iii)]{LT2009} and~\eqref{eq:hismax} that
  $$|\classCox_\zeta| = |W|/|Z(c_\zeta)| = d_1 \cdots d_{n-1},$$
  where $Z(c_\zeta)$ denotes the centralizer of $c_\zeta$, and where we used the fact that $|W| = d_1 \cdots d_n$.
  On the other hand, we have that the Pianzola-Weiss polynomial for $\zeta$ is
  $$P(T) := \sum_{w \in W} T^{\dim{V(w,\zeta)}} = d_1 \cdots d_{n-1}(T+h-1),$$
  where $V(w,\zeta) = \{ v \in V : w(v) = \zeta \cdot v\}$ is the eigenspace for $w$ with eigenvalue~$\zeta$, see~\cite[Corollary~10.39]{LT2009}.
  The coefficient $d_1 \cdots d_{n-1}$ of $T$ in $P(T)$ counts the number of elements in $W$ that have an eigenvalue $\zeta$.
  Since all elements in $\classCox_\zeta$ have $\zeta$ as an eigenvalue, we conclude that all elements in $W$ that have $\zeta$ as an eigenvalue are already contained in $\classCox_\zeta$, and are thus Coxeter elements. The statement follows.
\end{proof}

\subsection{The classification of well-generated complex reflection groups}
\label{sec:classification}

\newcommand{\Goon}{\widehat G(1,1,n)}
In the remainder of this section, we recall the classification of irreducible well-generated complex reflection groups from~\cite{ST1954}. A \emph{monomial} matrix is a square matrix with exactly one non-zero entry in each row and column. Let $G(r,p,n)$ with $p$ dividing~$r$ be the group of all monomial $(n \times n)$-matrices with entries being~$r$-th roots of unity, such that the product of the non-zero entries is an $(r/p)$-th root of unity. It is a subgroup of $GL(\CC^n)$, and it has order $n! r^n/p$.
Notice that the special case $G(1,1,n)$ is the group of all permutation matrices of size~$n$. This group leaves invariant the proper subspace $V_0$ of $\CC^n$ formed by vectors whose coordinates sum to zero. We denote by $\Goon$ the restriction of $G(1,1,n)$ to this subspace. Hence $\Goon \subset GL(V_0)$ is an irreducible complex reflection group of rank $n-1$.

\begin{theorem}[Shephard, Todd]\label{thm:shephardtodd}
Let $W$ be an irreducible complex reflection group. 
Then $W$ is isomorphic to one of the following complex reflection groups:
\begin{itemize}
\item the group $G(r,p,n)$ for some integers $r\geq 2$ and $n,p \geq 1$ such that $p$ divides~$r$ (if $n=1$, we impose moreover that $p=1$).
\item the symmetric group $\Goon$ for some $n\geq 5$;
\item one of $34$ exceptional groups.
\end{itemize}
\end{theorem}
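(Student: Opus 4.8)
The plan is to follow the structure of the original proof of Shephard and Todd~\cite{ST1954} (a complete modern treatment is in~\cite{LT2009}), organized according to whether the reflection representation of $W$ is primitive or imprimitive. Suppose first that the action of $W$ on $V=\CC^n$ is \emph{imprimitive}, so that $V=V_1\oplus\dots\oplus V_m$ with $m\geq 2$ and $W$ permuting the summands transitively; one may take all $V_i$ of the same dimension $n/m$. I would first observe that no reflection of $W$ can interchange two of the blocks unless those blocks are lines: a linear map swapping two summands of dimension $\geq 2$ has $-1$ as an eigenvalue with multiplicity $\geq 2$, hence is not a reflection. Since $W$ is generated by reflections and acts transitively on the blocks, some reflection must move a block, so all blocks are lines and $n=m$. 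A reflection of $W$ therefore either permutes two of the coordinate lines (up to a scalar) or acts diagonally; bookkeeping these two types, using that the scalars involved are roots of unity and that $W$ is irreducible, identifies $W$ with a monomial group $G(r,p,n)$ for suitable $r\geq 2$ and $p\mid r$ (and for $n=1$ this family degenerates to the cyclic groups $\mu_r=G(r,1,1)$).

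If the action of $W$ is \emph{primitive}, the crucial step is an a priori bound on the rank: using the classical theory of finite primitive subgroups of $\GL_n(\CC)$ together with the constraint that $W$ contains enough reflections, one shows that primitive complex reflection groups exist only for $n\leq 8$. With the rank bounded, the classification reduces to a finite enumeration carried out dimension by dimension: in dimension $1$ there are no primitive reflection groups beyond the cyclic ones already accounted for; in dimension $2$ the primitive ones are the finitely many central extensions of the tetrahedral, octahedral and icosahedral groups that are generated by reflections; and in each of the dimensions $3$ through $8$ there are again finitely many, among them the Weyl groups of types $F_4$, $E_6$, $E_7$, $E_8$ and the Coxeter groups $H_3$, $H_4$. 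Collecting all of these yields the $34$ exceptional groups $G_4,\dots,G_{37}$.

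Finally I would remove the remaining redundancies. The group $G(1,1,n)$ of permutation matrices is not irreducible, so one replaces it by its irreducible constituent $\Goon\cong\Sn_n$ acting on the sum-zero subspace $V_0$; for $n\leq 4$ this already appears inside the $G(r,p,n)$ family (for instance $\Goon\cong G(2,2,3)$ when $n=4$, while $n=2,3$ give groups of rank $\leq 2$ such as $G(2,1,1)$ and $G(3,3,2)$), which is the reason for the hypothesis $n\geq 5$ in the statement. The main obstacle is unquestionably the treatment of the primitive case: both the rank bound and the dimension-by-dimension enumeration are the hard classification-theoretic heart of~\cite{ST1954}, and there is no shortcut around that finite but substantial analysis; by contrast the reduction to the primitive case and the pruning of overlaps are routine linear algebra. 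In the present paper it is of course enough to cite~\cite{ST1954} for the statement, and we have only indicated the shape of the argument here.
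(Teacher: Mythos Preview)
The paper does not prove this theorem at all: it is stated with attribution to Shephard and Todd~\cite{ST1954} and then used as a black box for the rest of the paper. You correctly recognize this in your final sentence, and your proposal goes beyond what the paper does by sketching the shape of the original classification argument.

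As a sketch, your outline is essentially sound and follows the standard route (imprimitive versus primitive, then the rank bound and finite enumeration in the primitive case). One small imprecision: your argument that a reflection cannot swap two blocks of dimension $\geq 2$ invokes ``$-1$ as an eigenvalue with multiplicity $\geq 2$'', which tacitly assumes the reflection is an involution. Complex reflections may have order $>2$, so the cleaner argument is simply that if a reflection $\tau$ sends a block $V_i$ into a different block $V_j$, then any fixed vector $v\in V_i$ satisfies $v=\tau(v)\in V_j$, forcing $v\in V_i\cap V_j=0$; hence $\dim V_i\leq\operatorname{codim}\fixspace(\tau)=1$. This yields the same conclusion without assuming order $2$. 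Beyond that, your identification of the overlaps (e.g.\ $\Goon\cong G(2,2,3)$ for $n=4$) is correct, and your assessment that the primitive case is the substantive part is accurate.
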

Following \cite{ST1954}, we denote the $34$ exceptional \defn{Shephard-Todd classification types} by $G_4$ through $G_{37}$; the degrees and codegrees of all irreducible complex reflection groups were already computed therein.
For the groups $G(r,p,n)$, the degrees are given by
\begin{align}
  r,2r,\dots,(n-1)r,rn/p, \label{eq:degrees}
\end{align}
while the codegrees are given by
\begin{align}
  0,r,\dots,(n-1)r &\text{ if } p < r, \nonumber \\
  0,r,\dots,(n-2)r,(n-1)r-n &\text{ if } p = r. \label{eq:codegrees}
\end{align}
For the symmetric group $\Sn_n\simeq\Goon$, the degrees and codegrees are given by $2,3,\dots,n$ and $0,1,\dots,n-2$, respectively.
\begin{table}[t]
  \centering
  \begin{tabular}{c || l}
    rank & Well-generated exceptional classification types \\
    \hline
    \hline
    &\\[-10pt]
    $2$ & $G_4,G_5,G_6,G_8,G_9,G_{10},G_{14},G_{16},G_{17},G_{18},G_{20},G_{21}$ \\
    \hline
    &\\[-10pt]
    $3$ & $G_{23}=H_3,G_{24},G_{25},G_{26},G_{27}$ \\
    \hline
    &\\[-10pt]
    $4$ & $G_{28}=F_4,G_{29},G_{30}=H_4,G_{32}$ \\
    \hline
    &\\[-10pt]
    $5$ & $G_{33}$ \\
    \hline
    &\\[-10pt]
    $6$ & $G_{34},G_{35}=E_6$ \\
    \hline
    &\\[-10pt]
    $7$ & $G_{36}=E_7$ \\
    \hline
    &\\[-10pt]
    $8$ & $G_{37}=E_8$ \\
  \end{tabular}
  \caption{All well-generated exceptional classification types.}
  \label{tab:exceptionaltypes}
\end{table}
This implies that the three infinite families of irreducible well-generated complex reflection groups are given by $\Goon$, $G(r,1,n)$ and $G(r,r,n)$. Observe that the infinite families of real reflection groups are the particular cases
\begin{itemize}
  \item $G(r,r,2) = I_2(r)$,
  \item $\Goon = A_{n-1}$,
  \item $G(2,1,n) = B_n$,
  \item $G(2,2,n) = D_n$.
\end{itemize}
Observe also that the group $G(r,1,1)$ is the cyclic group $C_r$ of order~$r$.
The complete list of well-generated exceptional Shephard-Todd classification types is given in Table~\ref{tab:exceptionaltypes}, their degrees and codegrees can be found in Table~\ref{tab:exceptionaldegreescodegrees} in Appendix~\ref{app:exceptionaltypes}.
We will later use this classification to prove Theorem~\ref{thm:main}.
 In particular, see Appendix~\ref{app:exceptionaltypes} for the representation theoretic data for the exceptional types.

\section{Direct proof for the cyclic groups and dihedral groups}
\label{sec:dihedralgroup}

As a warmup exercise, we prove the main theorem for the cyclic group $C_r$ and for the dihedral group $I_2(r)$. Observe that the cyclic groups will as well be covered in Section~\ref{par:Gr1n}, while the proof for the dihedral groups will not be covered in Section~\ref{par:Grrn} and is thus only given here.

\smallskip

\noindent{\bf The proof for the cyclic group $G(r,1,1)=C_r$ with $r\geq 2$.} The cyclic group $G(r,1,1)=C_r$ formed by the~$r$-th roots of unity is a complex reflection group of rank $n=1$. It has one reflecting hyperplane, and since any element of the group different from $1$ is a reflection, it contains $(r-1)$ reflections. Let $\zeta$ be a primitive~$r$-th root of unity. Since the Coxeter number is $h=r$, the element $\zeta$ is a Coxeter element.
For $\ell \geq0$, let $f_\ell$ be the number of factorizations of $\zeta$ into a product of $\ell$ reflections. Then $(r-1)^\ell-f_{\ell}$ is the number of $\ell$-tuples of reflections $(\zeta_1,\zeta_2,\dots,\zeta_\ell)$ whose product is not equal to $\zeta$. Given such a tuple, there is a unique reflection $\zeta_{\ell+1}$ such that $\zeta_1\zeta_2\dots\zeta_\ell\zeta_{\ell+1} = \zeta$.  We thus obtain the recurrence relation
$$
f_{\ell+1} = (r-1)^\ell - f_\ell.
$$
Given that $f_0=0$ and $f_1=1$, this implies that $f_\ell = \frac{1}{r}\big((r-1)^\ell-(-1)^\ell\big)$ for all $\ell \geq 0$. We thereby obtain
$$
\formula_{C_r}(t) = \sum_{\ell\geq 0} \frac{t^\ell}{\ell!}f_\ell = \frac{1}{r}\left(e^{(r-1)t} - e^{-t}\right),
$$
which yields Formula~\eqref{eq:coxetergenus} in this case.

\smallskip

\noindent{\bf The proof for the dihedral group $G(r,r,2)=I_2(r)$ with $r\geq 2$.} Let $\zeta$ be a primitive~$r$-th root of unity. The dihedral group $G(r,r,2)=I_2(r)$ consists of the~$r$ reflections $s_i = \left(\begin{smallmatrix} 0&\zeta^i\\ \zeta^{-i}&0 \end{smallmatrix}\right)$ and the~$r$ elements $t_i = \left(\begin{smallmatrix} \zeta^i&0\\ 0&\zeta^{-i} \end{smallmatrix}\right)$, for $1 \leq i \leq r$ (the elements $t_i$ are the ``rotations'' in the classical representation of $I_2(r)$ as the symmetry group of the regular~$r$-gon).
Notice that the hyperplanes $\ker(\one-s_i)$ are all distinct, so that there are~$r$ reflecting hyperplanes. The Coxeter number is $h=r$, which shows that $c=s_1 s_0 = t_1$ is a Coxeter element.
Note that $s_i s_j = c$ if and only if $i = j+1\ (\mod r)$.
In particular, since for any integer $g \geq 0$ any product of $2g+1$ reflections is again a reflection, there is a unique other reflection such that the complete product of all $2g+2$ reflections equals $c$.
Therefore the number of factorizations of $c$ into $2g+2$ reflections is equal to $r^{2g+1}$. We thereby obtain
$$\formula_{I_2(r)}(t) = \sum_{g\geq0}\frac{t^{2g+2}}{(2g+2)!}r^{2g+1} 
= \frac{e^{rt}+e^{-rt}-2}{2r}=
\frac{1}{2r}\Big(e^{rt/2}-e^{-rt/2}\Big)^2,$$
which yields Formula~\eqref{eq:coxetergenus} in this case.

\section{A representation theoretic approach to the main theorem}
\label{sec:representationtheory}

In this section, we recall a classical approach to the enumeration of
factorizations in groups via representation theory. This approach is widely used
in the literature to enumerate factorizations in the symmetric group, see
e.g.~\cite[Appendix~A]{LandoZvonkine} and the references therein. It is
based on the Frobenius formula given in Formula~\eqref{eq:frobenius} below.
For completeness, we recall its proof here, following the same lines as in~\cite[Section~A.1.3]{LandoZvonkine}.

Let $W$ be an irreducible well-generated complex reflection group.
The group algebra $\CC[W]$ is decomposed into irreducible $W$-modules as
\begin{align}\label{eq:regular}
 \CC[W] = \bigoplus_{\lambda \in \Lambda} \dim(\lambda) V^\lambda,
\end{align}
where $V^\lambda$, $\lambda \in \Lambda$ is a complete list of irreducible
representations of $W$.
We let $\classR\subset W$ be the union of the conjugacy classes of all reflections. We use the same letter, in non-calligraphic style, to denote the corresponding sum in the group algebra,
$$
 R := \sum_{\tau \in \classR} \tau.
$$
Moreover, we extend class functions linearly from $W$ to $\CC[W]$. In particular if $\chi :W\rightarrow \CC$ is a class function, we use the notation
$$
\chi(R) := \sum_{\tau \in \classR} \chi(\tau).
$$

We consider the action of $W$ on $\CC[W]$ by left multiplication.
Since an element $w \in W$ acts on $\mathbb{C}[W]$ by permuting the canonical basis, the trace of this action is equal to the number of fixed points under this action, which is $|W|$ if $w=1$ and $0$ otherwise.
Since $R$ is in the center of $\CC[W]$, it acts as a scalar on every irreducible representation, so the decomposition~\eqref{eq:regular} gives the Frobenius formula
\begin{align}
\#\Big\{ (\tau_1,\tau_2,\dots,\tau_\ell)\in \classR^\ell \ : \ \tau_1 \tau_2 \dots &\tau_\ell = c \Big\} \nonumber \\
&= \frac{1}{|W|} \trace_{\mathbb{C}[W]} \Big( {R}^{\ell} c^{-1} \Big) \nonumber \\
&= \frac{1}{|W|} \sum_{\lambda \in \Lambda} \dim(\lambda) \trace_{V^\lambda} \Big( {R}^{\ell} c^{-1} \Big) \nonumber\\
&= \frac{1}{|W|} \sum_{\lambda \in \Lambda} \dim(\lambda)^{1-\ell} \chi_\lambda({R})^{\ell} \chi_\lambda(c^{-1})\label{eq:frobenius},
\end{align}
where $\chi_\lambda$ is the character of the representation $\lambda$.
Using~\eqref{eq:frobenius}, we can now deduce that $\formula_W(t)$ is given by the finite sum
\begin{align}
\label{eq:frobeniusZt}
  \formula_W(t) &=
    \frac{1}{|W|} 
    \sum_{\lambda \in \Lambda} \dim(\lambda)
    \chi_\lambda(c^{-1})
    \exp\left(t \cdot \frac{\chi_\lambda({R})}{\dim(\lambda)}\right).
\end{align}

We will use this formula, together with the classification of complex reflection groups and the
knowledge of their irreducible representations to prove Theorem~\ref{thm:main}.
In order to simplify notation, we will often make use of the \defn{normalized character} defined as the character divided by the dimension of the corresponding representation $\chi_\lambda(\one) = \dim(\lambda)$.
By convention, a normalized character will be denoted with a tilde, such as
$$\widetilde\chi_\lambda := \frac{1}{\dim(\lambda)} \cdot \chi_\lambda.$$

\section{Proof of the main theorem for the infinite families}
\label{sec:classicaltypes}

In this section, we treat the case of the infinite families of
well-generated complex reflection groups, namely $\Goon\simeq \Sn_n$, $G(r,1,n)$ and $G(r,r,n)$.
For technical reasons we exclude the case $G(r,r,2)$, which has been treated in Section~\ref{sec:dihedralgroup}.
As mentioned in the introduction the case of  $\Goon\simeq \Sn_n$ is already known, but we discuss it here to set up notations and to recall some needed results. Our approach for this case is similar e.g. to~\cite{SSV} or to~\cite[Appendix~A.2.4]{LandoZvonkine}.

\subsection{The proof for the symmetric group \texorpdfstring{$\Goon\simeq\Sn_n$}{Sn}}

The symmetric group $\Sn_n$ is the group of all permutations of $\{1,2,\dots,n\}$. 
Its irreducible representations are classically indexed by the set of
partitions of~$n$, see e.g.~\cite{Sagan}.
In this section we are going to use extensively two classical results, the
\defn{hook-length formula} and the \defn{Murnaghan-Nakayama rule}.
These formulas give the dimensions of these representations and the evaluation of
their characters, respectively.
We refer the reader to the original papers~\cite{FRT,Mur37,Nak40} or to~\cite[Sections~3.10 and~4.10]{Sagan} for a detailed treatment of these results.
We will use them several times without again recalling these references.

A \defn{partition} of~$n$ is a non-increasing sequence $\lambda=[\lambda_1,\lambda_2,\dots,\lambda_r]$ of positive integers summing to~$n$.
The integer~$n$ is called the \defn{size} of $\lambda$, denoted by $|\lambda|=n$, and we write $\lambda \vdash n$.
The integers $\lambda_i$ are called the \defn{parts} of $\lambda$.
To denote a partition, we often use superscript notation, we write for example $\lambda=[4^2, 3^1, 1^3]$ for the partition $\lambda = [4,4,3,1,1,1]$ of $14$.
By convention, the \defn{empty partition} $\varnothing$ is the unique partition of~$0$.

If not otherwise stated, we denote by $\chi_\lambda$ the character of the representation of $\Sn_n$ indexed by the partition $\lambda \vdash n$.

\medskip

We choose the long cycle $c=(1,2,\dots,n)$ as a Coxeter element. Since all Coxeter elements are conjugated to $c$ in $\Sn_n$, it is enough to prove Theorem~\ref{thm:main} for this particular element.
Since $c$ is a long cycle, as is its inverse $c^{-1}$, the Murnaghan-Nakayama rule implies that $\chi_\lambda(c^{-1})=0$ unless $\lambda$ is a \defn{hook}, i.e., unless $\lambda$ is of the form $\hook{n}{k} := [n-k,1^{k}]$ for some $0\leq k < n$.
The following lemma is a straightforward consequence of the hook-length formula and of the Murnaghan-Nakayama rule, and will be useful in several places.

\begin{lemma}[\bf Hook characters: dimension and evaluation on a long cycle]
\label{lemma:hooks}
Let $n \geq 1$ and $0\leq k < n$. Then
$$
\dim(\hook{n}{k}) = \binom{n-1}{k} \quad \text{and} \quad \chi_{\hook{n}{k}} \big((1,2,\dots,n)\big) =(-1)^k.
$$
\end{lemma}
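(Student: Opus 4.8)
The plan is to prove both formulas directly by combining the hook-length formula with the Murnaghan-Nakayama rule, treating the hook shape $\hook{n}{k} = [n-k, 1^k]$ explicitly. First I would draw the Young diagram of $\hook{n}{k}$: it consists of a top row of $n-k$ boxes together with a first column of $k+1$ boxes (the corner box being shared). For the dimension, I would apply the hook-length formula $\dim(\lambda) = n! / \prod_{b} \hook{}{}(b)$ where the product runs over all boxes $b$. For the hook shape the hook lengths are easy to list: the corner box has hook length $n$; the remaining $n-k-1$ boxes of the top row have hook lengths $n-k-1, n-k-2, \dots, 1$; and the remaining $k$ boxes of the first column have hook lengths $k, k-1, \dots, 1$. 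Hence $\prod_b \hook{}{}(b) = n \cdot (n-k-1)! \cdot k!$, and therefore $\dim(\hook{n}{k}) = n! / (n \cdot (n-k-1)! \, k!) = (n-1)! / ((n-k-1)!\, k!) = \binom{n-1}{k}$, as claimed.

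For the character value on the long cycle, I would invoke the Murnaghan-Nakayama rule for the single $n$-cycle: $\chi_\lambda\big((1,2,\dots,n)\big)$ equals $(-1)^{\mathrm{ht}(\lambda)}$ if $\lambda$ is a hook (so that the whole diagram is a single border strip of size $n$) and $0$ otherwise, where the height $\mathrm{ht}$ is one less than the number of rows the border strip occupies. For $\hook{n}{k} = [n-k, 1^k]$ the diagram occupies $k+1$ rows, so the height is $k$, giving $\chi_{\hook{n}{k}}\big((1,2,\dots,n)\big) = (-1)^k$. Alternatively, if one prefers to avoid quoting the specialized form of the MN rule, one can peel off the single cycle by removing a border strip of length $n$: for a hook there is exactly one such strip (the whole diagram), contributing the sign $(-1)^k$ with the empty remaining partition, and for any non-hook $\lambda$ there is no border strip of size $|\lambda|$, so the value is $0$. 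Since the excerpt already restricts attention to hooks (the surrounding text notes $\chi_\lambda(c^{-1}) = 0$ unless $\lambda$ is a hook), only the hook case is needed here.

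There is essentially no serious obstacle: both statements are textbook consequences of the two named formulas once the hook lengths and the border-strip structure of $\hook{n}{k}$ are written down. The only thing requiring a small amount of care is the sign convention in the Murnaghan-Nakayama rule — whether the exponent is the number of rows minus one or the number of columns minus one of the removed strip — but for a hook strip of size $n$ the two conventions are consistent since $\mathrm{ht} + \mathrm{wt} = n - 1$ would not match; the correct statement is that $\chi$ picks up $(-1)^{r-1}$ where $r$ is the number of rows spanned, and for $[n-k,1^k]$ that is $(-1)^k$. I would also remark that $c^{-1} = (n, n-1, \dots, 1)$ is again an $n$-cycle, so the same computation applies verbatim to $\chi_{\hook{n}{k}}(c^{-1})$, which is what is actually used downstream. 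Finally I would double-check the two boundary cases $k = 0$ (the trivial representation, $\dim = 1$, character $1$) and $k = n-1$ (the sign representation, $\dim = 1$, character $(-1)^{n-1}$) against the formulas, which both check out.
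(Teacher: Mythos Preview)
Your proposal is correct and follows exactly the approach the paper indicates: the paper does not give a proof of this lemma at all, simply stating that it is ``a straightforward consequence of the hook-length formula and of the Murnaghan-Nakayama rule,'' and your writeup supplies precisely those details. Your hook-length computation and border-strip/height argument are both accurate (the slightly garbled sentence about the two sign conventions can be dropped, since only the rows-minus-one convention is standard and you apply it correctly).
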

The reflections of the group $\Goon \simeq \Sn_n$ are given by the transpositions $(i,j)$ for $1\leq i < j \leq n$.
Hence in view of evaluating~\eqref{eq:frobeniusZt}, we will also need the value of hook characters on transpositions.
If $\lambda=[\lambda_1,\lambda_2,\dots,\lambda_r] \vdash n$, we define the multiset of its contents as
$$
\content(\lambda) :=  \bigmultiset{ j-i : 1 \leq i \leq r, 1 \leq j \leq \lambda_i },
$$
where we indicate the multiset notation by double braces. Notice that $\content(\lambda)$  has cardinality~$n$.
We have the following classical lemma relating character evaluations and contents.
\begin{lemma}[\bf Normalized character evaluated on a transposition]
\label{lemma:contents}
Let $\lambda \vdash n$ be a partition and $\tau \in \Sn_n$ be a transposition.
Then we have
$$
\widetilde\chi_\lambda(\tau) = \frac{2}{n(n-1)} \sum_{ x\in
\content(\lambda)} x .
$$
\end{lemma}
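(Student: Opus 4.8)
The plan is to compute $\chi_\lambda(\tau)$ for a transposition $\tau$ by applying the Murnaghan--Nakayama rule and then recognizing the resulting expression as a content sum. First I would recall that all transpositions are conjugate in $\Sn_n$, so $\chi_\lambda(\tau)$ depends only on the cycle type $[2,1^{n-2}]$; write $\mu = [2,1^{n-2}]$. The Murnaghan--Nakayama rule applied to this cycle type says that to evaluate $\chi_\lambda$ on $\mu$, one first removes a domino (a border strip of size $2$) from $\lambda$ in all possible ways, with signs $(-1)^{\operatorname{ht}}$, and then multiplies by the number of standard Young tableaux of the remaining shape $\nu$ of size $n-2$, i.e. by $\dim(\nu) = f^\nu$. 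Thus
\begin{align*}
\chi_\lambda(\mu) = \sum_{\nu} (-1)^{\operatorname{ht}(\lambda/\nu)} f^\nu,
\end{align*}
where the sum is over partitions $\nu \subset \lambda$ with $\lambda/\nu$ a border strip of size $2$. A domino is either a horizontal domino (height $0$, contributing $+f^\nu$) or a vertical domino (height $1$, contributing $-f^\nu$).

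The key step is to match this up with the content sum. I would use the branching rule twice: $f^\lambda = \sum_{\rho} f^\rho$ over $\rho \subset \lambda$ obtained by removing one corner box, and then again to expand each $f^\rho$. This expresses $f^\lambda$ and the various $f^\nu$ in terms of partitions of size $n-2$, and the combinatorial content is the identity that, for each $\nu$ of size $n-2$ contained in $\lambda$ with $|\lambda/\nu|=2$, the number of ways to build $\lambda$ from $\nu$ by adding two boxes one at a time, \emph{counted with a sign depending on whether the boxes were added in a ``commuting'' order}, telescopes. More cleanly, the standard approach is to note the operator identity: if $J_k$ denotes the $k$-th Jucys--Murphy element, then $J_2 + J_3 + \cdots + J_n$ acts on the irreducible module $V^\lambda$ as the scalar $\sum_{x \in \content(\lambda)} x$ (this is the classical fact that JM elements are simultaneously diagonalized by the Young basis with content eigenvalues, and their sum is central), while on the other hand $J_2 + \cdots + J_n = \sum_{i<j}(i,j)$ is exactly the sum of all transpositions in $\CC[\Sn_n]$. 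Since this central element acts as the scalar $\chi_\lambda(\text{class sum})/\dim(\lambda) = \binom{n}{2}\widetilde\chi_\lambda(\tau)$ on $V^\lambda$, equating the two scalars gives
\begin{align*}
\binom{n}{2}\,\widetilde\chi_\lambda(\tau) = \sum_{x \in \content(\lambda)} x,
\end{align*}
which is precisely the claimed formula after dividing by $\binom{n}{2} = n(n-1)/2$.

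I expect the main obstacle to be deciding how much of the Jucys--Murphy machinery to invoke versus giving a self-contained Murnaghan--Nakayama computation. The cleanest writeup probably cites the fact that the sum of Jucys--Murphy elements equals the sum of transpositions and acts with content-sum eigenvalue; but since the paper has so far only cited the hook-length formula and the Murnaghan--Nakayama rule, it may be preferable to give the direct argument: show that the signed sum $\sum_\nu (-1)^{\operatorname{ht}(\lambda/\nu)} f^\nu$ over dominoes equals $\frac{2}{n(n-1)} \big(\sum_{x\in\content(\lambda)} x\big) f^\lambda$ by an induction on $n$ using the branching rule, peeling off a corner box $b$ of $\lambda$ with content $c(b)$ and checking that the dominoes containing $b$ contribute the term accounting for $c(b)$ while the rest match the inductive hypothesis for $\lambda \setminus b$. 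Either way the identity is classical; I would state it with a reference to $\cite{Sagan}$ or to the Jucys--Murphy literature and give the short proof along whichever of these two lines is most consistent with the paper's level of detail.
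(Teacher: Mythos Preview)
Your proposal is correct, and the Jucys--Murphy argument you describe is exactly the paper's proof: write the sum of all transpositions as $\sum_i J_i$, invoke the fact (citing \cite{OkounkovVershik}) that any symmetric function of the $J_i$ acts on $V^\lambda$ as that function evaluated at $\content(\lambda)$, and specialize to the power-sum $x_1+\cdots+x_n$. Contrary to your guess about the paper's stylistic preference, it does \emph{not} give the self-contained Murnaghan--Nakayama/domino computation; it simply cites the Jucys--Murphy result and is done in three lines.
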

\begin{proof}
As in Section~\ref{sec:representationtheory}, we let $R=\sum_{\tau\in\classR}\tau$ be the sum of all transpositions in the group algebra $\CC[\Sn_n]$.
Write $R=\sum_{i=1}^{n} J_i$ where $J_i = \sum_{i<j} (i,j)$ is the $j$-th Jucys-Murphy element. 
It is well known --~see e.g. \cite{OkounkovVershik}~-- that given any symmetric function $f$ in~$n$ variables, the element $f(J_1,J_2,\dots,J_n)$ acts on the irreducible module $V^\lambda \subset \CC[\Sn_n]$ as the scalar $f(\content(\lambda))$.
The statement follows by taking $f(x_1,x_2,\dots x_n)=x_1+x_2+\dots +x_n$, and observing that $\chi_\lambda(R) = \frac{n(n-1)}{2}\chi_\lambda(\tau)$.
\end{proof}
In the particular case of hooks, we obtain
\begin{align}
\widetilde\chi_{\hook{n}{k}}(\tau) &=  \frac{n-2k-1}{n-1}. \label{eq:hooktranspo}
\end{align}
We now recalled all ingredients needed to provide a proof of Theorem~\ref{thm:main} for the symmetric group.
\begin{proof}[Proof of Theorem~\ref{thm:main} for the symmetric group $W = \Goon$]
Using Equation~\eqref{eq:hooktranspo} together with Lemma~\ref{lemma:hooks}, Formula~\eqref{eq:frobeniusZt} can be rewritten as
\begin{align*}
\formula_W(t) &= \frac{1}{|W|} 
\sum_{k =0}^{n-1} \binom{n-1}{k}
  (-1)^k e^{t\frac{n(n-2k-1)}{2}}\\
&= \frac{1}{|W|} e^{t\binom{n}{2}}
 \left(1-e^{-tn}\right)^{n-1}
\\
&=
\frac{1}{|W|}\left(e^{t\frac{n}{2}}-e^{-t\frac{n}{2}}\right)^{n-1}.
\end{align*}
Since $\Goon$ has rank $n-1$, and $|\mathcal{R}|=|\mathcal{R}^*|=\binom{n}{2}$, this coincides with~\eqref{eq:coxetergenus} in this case.
\end{proof}

\subsection{The structure of the proof}

\label{par:4step}
In the cases of $G(r,1,n)$ and $G(r,r,n)$, the structure of the proof will be similar to what we just did for the symmetric group.
It can be decomposed into the following four steps.
\begin{enumerate}[\bf step 1]
\item Provide a list of all irreducible characters not vanishing on the inverse of the Coxeter element (for $\Sn_n$, these were exactly the hook characters).
\item Compute the dimension of these characters and their evaluation on the inverse of the Coxeter element (for $\Sn_n$, this was given by Lemma~\ref{lemma:hooks}).
\item Compute the evaluation of these characters on the conjugacy classes of reflections (for $\Sn_n$, this was given by Equation~\eqref{eq:hooktranspo}).
\item Use Formula~\eqref{eq:frobeniusZt} and simplify the sum (for $\Sn_n$, we just applied Newton's binomial formula) to prove Formula~\eqref{eq:coxetergenus}.
\end{enumerate}

\begin{remark}
In Section~\ref{par:Grrn}, we prove the main theorem for the group
$G(r,r,n)$ following the four steps above. In order to do that, we 
list the irreducible characters of $G(r,r,n)$ that do not vanish on
 a Coxeter element (Lemma~\ref{lemma:step1Grr}). 
Note that since $G(r,r,n)$
is a subgroup of $G(r,1,n)$, 
another approach would be to apply
the Frobenius formula directly in the group $G(r,1,n)$%
\footnote{The only property we need for that is that any conjugate in $G(r,1,n)$ of a reflection in $G(r,r,n)$ is a
reflection in $G(r,r,n)$, which is true. Note that a Coxeter element $c$ of $G(r,r,n)$ is \emph{not} a Coxeter element of the group $G(r,1,n)$, but this does not prevent one to apply the Frobenius formula.}. This other approach would save us the description of the irreducible characters of $G(r,r,n)$ given in Appendix~\ref{sec:irreduciblecharactersrrn}.
We follow the first approach here because we think that Lemma~\ref{lemma:step1Grr} is interesting in itself. Indeed, together with the results for $G(r,1,n)$ and with the data for irreducible groups given in Appendix~\ref{app:exceptionaltypes}, this provides an exhaustive list of irreducible characters non vanishing on a Coxeter element in well-generated complex reflection groups, that may be useful in other situations.

\end{remark}

\subsection{The proof for the group \texorpdfstring{$G(r,1,n)$}{G(r,1,n)} with \texorpdfstring{$r\geq 2$}{r>=2}}
\label{par:Gr1n}

The Coxeter number for $G(r,1,n)$ is $h = nr$. Let $\zeta$ be a primitive $h$-th root of
unity, and let $\xi = \zeta^n$, which is a primitive~$r$-th root of unity.
Recall that $G(r,1,n)$ is the group of all monomial matrices whose non-zero entries are powers of $\xi$.
Given $\sigma\in\Sn_n$ and $(i_1,i_2,\dots,i_n)$ with $0 \leq i_1,\ldots,i_n < r$, we denote by
$w = \sigma \wprod (i_1,i_2,\dots,i_n)$ the element of $G(r,1,n)$ with entry $\xi^{i_\ell}$ at position $(\ell,\sigma_\ell)$ for $1\leq \ell \leq n$.
Observe that our presentation depends on the choice of $\xi$ and thus on the choice of $\zeta$.
We moreover denote by $|w| := \sigma$ the projection onto $\Sn_n$, and by $\|w\| := i_1+\ldots+i_n\ (\mod r)$.

Consider the element $c=c_0\wprod (0,0,\dots,0,1) \in G(r,1,n)$ where $c_0 \in \Sn_n$ is given by the long cycle $(1,2,\dots,n)$. We claim that $c$ is a Coxeter element of $G(r,1,n)$.
To see that, according to the discussion preceding Section~\ref{sec:classification}, it is enough to check that $c$ has an eigenvalue equal to $\zeta$. This is indeed the case, since the fact that $\xi = \zeta^n$ implies that the element $c$ has~$n$ eigenvalues given by $\big\{ \zeta, \zeta^{r+1},\zeta^{2r+1},\ldots,\zeta^{(n-1)r+1} \big\}$.
\begin{remark}\label{rem:coxeterClasses}
In the rest of this section, we will prove Theorem~\ref{thm:main} for the group $G(r,1,n)$, for the particular choice of Coxeter element $c=c_0\wprod (0,0,\dots,0,1)$. Since $c$ belongs to the conjugacy class $\classCox_{\zeta}$ of all Coxeter elements which are $\zeta$-regular, and since the number of factorizations into reflections is the same for elements in the same conjugacy class, this will also prove the theorem for all elements $c'\in\classCox_\zeta$. Finally, since we fixed $\zeta$ as an \emph{arbitrary} primitive $h$-th root of unity, this will imply the result for \emph{all} Coxeter elements of the group $G(r,1,n)$.
\end{remark}

The following proposition is well known and describes the irreducible characters of $G(r,1,n)$.
We refer to Appendix~\ref{sec:irreduciblecharactersr1n} for its proof and references.
\begin{proposition}\label{prop:Gr1n}
The complete and unambiguous list of irreducible characters of $G(r,1,n)$ is
obtained as follows. Let $\vec\lambda =
(\lambda^{(0)},\lambda^{(1)},\dots,\lambda^{(r-1)})$ be an~$r$-tuple of
partitions of total size~$n$, and let $k_\ell=|\lambda^{(\ell)}|$. One may view
$$
B:=G(r,1,k_0)\times G(r,1,k_1)\times \dots \times G(r,1,k_{r-1})
$$
as a subgroup of $G(r,1,n)$, formed by block-diagonal matrices.
The character $\chi_{\vec\lambda}$ of $G(r,1,n)$ is then given by
\begin{align}\label{eq:charGr1n}
\chi_{\vec\lambda}(w)
&= \frac{1}{|B|}
  \sum_{\substack{s \in G(r,1,n) \\ s^{-1}ws \in B}}
\prod_{\ell=0}^{r-1} \chi_{\lambda_\ell}(|w_\ell|)\cdot  \xi^{\ell\cdot \|w_\ell\| },
\end{align}
where $s^{-1}ws \in B$ in the sum is denoted by $(w_0,\ldots,w_{r-1})$, and where $\chi_{\lambda^{(\ell)}}$ denotes the $\Sn_{k_\ell}$-character indexed by
$\lambda^{(\ell)}$.
\end{proposition}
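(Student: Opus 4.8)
The statement is the classical description of the irreducible characters of the wreath product $G(r,1,n)\cong C_r\wr\Sn_n$, and the cleanest self-contained route is Clifford theory relative to the diagonal subgroup (alternatively one may simply quote it from the literature on wreath products, going back to Specht). Write $G=G(r,1,n)$ and let $A\cong C_r^{\,n}\trianglelefteq G$ be the abelian normal subgroup of diagonal monomial matrices, so that $G/A\cong\Sn_n$ acts on $A$, hence on $\operatorname{Irr}(A)$, by permuting coordinates. A character of $A$ is a tuple $(\xi^{a_1\cdot},\dots,\xi^{a_n\cdot})$ with $a_j\in\mathbb Z/r$, and two such lie in the same $\Sn_n$-orbit iff the multisets $\{\!\{a_1,\dots,a_n\}\!\}$ agree; thus the orbits are parametrised by the weak compositions $(k_0,\dots,k_{r-1})$ of $n$, where $k_\ell=\#\{j:a_j=\ell\}$. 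Choosing the representative $\theta$ with $a_1=\dots=a_{k_0}=0$, then $a_{k_0+1}=\dots=a_{k_0+k_1}=1$, and so on, its inertia group in $G$ is exactly the block-diagonal subgroup $B=G(r,1,k_0)\times\cdots\times G(r,1,k_{r-1})$ of the statement, since the diagonal part of an element always fixes $\theta$ and the permutation part does iff it preserves the blocks.

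Next I would extend $\theta$ and apply Gallagher's theorem. Since $\theta$ is linear and invariant under the complement $\Sn_{k_0}\times\cdots\times\Sn_{k_{r-1}}$ of $A$ in $B$, the rule $\tilde\theta(w_0,\dots,w_{r-1})=\prod_{\ell}\xi^{\ell\,\|w_\ell\|}$ defines a linear character of $B$ restricting to $\theta$ on $A$. By Gallagher's theorem, $\operatorname{Irr}(B\mid\theta)=\{\,\tilde\theta\cdot(\beta\circ\pi):\beta\in\operatorname{Irr}(\Sn_{k_0}\times\cdots\times\Sn_{k_{r-1}})\,\}$, where $\pi:B\to\Sn_{k_0}\times\cdots\times\Sn_{k_{r-1}}$, $\pi(w)=(|w_0|,\dots,|w_{r-1}|)$, is the projection to the underlying permutations. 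Since $\operatorname{Irr}(\Sn_{k_0}\times\cdots\times\Sn_{k_{r-1}})=\{\bigotimes_\ell\chi_{\lambda^{(\ell)}}:\lambda^{(\ell)}\vdash k_\ell\}$, the irreducible character of $B$ attached to $\vec\lambda$ sends $w=(w_0,\dots,w_{r-1})$ to $\prod_\ell\chi_{\lambda^{(\ell)}}(|w_\ell|)\,\xi^{\ell\|w_\ell\|}$ — precisely the summand appearing in~\eqref{eq:charGr1n}.

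Finally I would invoke the Clifford correspondence: induction $\operatorname{Ind}_B^{G}$ is a bijection $\operatorname{Irr}(B\mid\theta)\to\operatorname{Irr}(G\mid\theta)$, so each $\chi_{\vec\lambda}:=\operatorname{Ind}_B^{G}\!\big(\tilde\theta\cdot(\beta\circ\pi)\big)$ is irreducible, and as $(k_0,\dots,k_{r-1})$ runs over compositions of $n$ and each $\lambda^{(\ell)}$ over partitions of $k_\ell$ — i.e.\ as $\vec\lambda$ runs over all $r$-tuples of partitions of total size $n$ — these exhaust $\operatorname{Irr}(G)$ with no repetition. Substituting $\psi=\tilde\theta\cdot(\beta\circ\pi)$ into the standard induced-character formula $\operatorname{Ind}_B^{G}\psi(w)=\tfrac1{|B|}\sum_{s:\,s^{-1}ws\in B}\psi(s^{-1}ws)$ then yields~\eqref{eq:charGr1n} verbatim. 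The only delicate point is the bookkeeping in the first paragraph — matching the chosen orbit representative $\theta$ to the block-diagonal $B$ and checking that $\|w_\ell\|$ is exactly the exponent-sum that $\theta$ reads off; after that the representation-theoretic content is entirely standard, and in a full write-up I would make these identifications explicit and cite the literature for Gallagher's theorem, the Clifford correspondence, and the representation theory of wreath products.
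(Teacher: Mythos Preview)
Your proposal is correct and follows essentially the same approach as the paper: the paper applies Serre's Proposition~25 on irreducible representations of a semidirect product $A\cdot K$ with $A$ abelian (taking $A=C_r^{\,n}$ and $K=\Sn_n$), which is precisely the packaged form of the Clifford/Gallagher argument you spell out, with the same orbit analysis, the same inertia group $B=G_\iota$, and the same induced-character formula yielding~\eqref{eq:charGr1n}. The only cosmetic difference is that you invoke Gallagher's theorem and the Clifford correspondence by name, whereas the paper quotes the ready-made statement from Serre; the content is identical.
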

Having the list of irreducible characters at hand, we can now proceed with the four steps in the proof as explained in Section~\ref{par:4step}.

\step{1} The following lemma describes those characters not vanishing on the inverse of the Coxeter element. 
\begin{lemma}
  The character $\chi_{\vec\lambda}$ defined in~\eqref{prop:Gr1n} vanishes on $c^{-1}$ unless
  $$ \vec\lambda = \ghook{n}{k}{q} := (0,\dots,0,\hook{n}{k},0,\dots0) \text{ for } 0 \leq q < r \text{ and } 0 \leq k < n, $$
  where as before $\hook{n}{k}$ is a hook of size~$n$, and where the hook appears at position~$q$.
\end{lemma}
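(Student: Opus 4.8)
The plan is to evaluate the character formula \eqref{eq:charGr1n} directly at $w = c^{-1}$, where $c = c_0 \wprod (0,\dots,0,1)$ and $c_0 = (1,2,\dots,n)$. First I would record that $c^{-1} = c_0^{-1} \wprod (0,0,\dots,0,-1)$ (mod $r$ in the last slot), so $|c^{-1}| = c_0^{-1}$ is again an $n$-cycle and $\|c^{-1}\| = r-1 \not\equiv 0$. The key structural observation is that an element $w$ is conjugate in $G(r,1,n)$ into the block subgroup $B = G(r,1,k_0)\times\cdots\times G(r,1,k_{r-1})$ only if the cycle structure of the underlying permutation $|w|$ is compatible with the block sizes $k_\ell = |\lambda^{(\ell)}|$; since $|c^{-1}|$ is a single $n$-cycle, the only way to fit it into $B$ is to have exactly one block of size $n$ and all others of size $0$. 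This immediately forces $\vec\lambda$ to be supported in a single coordinate $q$, i.e. $\vec\lambda = (0,\dots,0,\lambda^{(q)},0,\dots,0)$ with $|\lambda^{(q)}| = n$, or else the sum in \eqref{eq:charGr1n} is empty and $\chi_{\vec\lambda}(c^{-1}) = 0$.

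Next I would deal with the remaining freedom, namely the choice of the partition $\lambda^{(q)} \vdash n$ itself. Once $\vec\lambda$ is concentrated at position $q$, the subgroup $B$ is just $G(r,1,n)$ sitting in coordinate $q$, and for every $s$ appearing in the sum the conjugate $s^{-1}c^{-1}s$ lies in this copy with underlying permutation an $n$-cycle and with $\|\cdot\| \equiv r-1$. The formula then reduces, up to an overall nonzero scalar $\xi^{q(r-1)}$ coming from the weight factor, to a nonzero multiple of $\chi_{\lambda^{(q)}}(c_0^{-1})$, the ordinary symmetric-group character of $\lambda^{(q)}\vdash n$ evaluated on an $n$-cycle. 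By the Murnaghan–Nakayama rule this vanishes unless $\lambda^{(q)}$ is a hook $\hook{n}{k} = [n-k,1^k]$ for some $0\le k<n$ — exactly the same mechanism already used for the symmetric group earlier in the paper. Combining the two reductions yields that $\chi_{\vec\lambda}(c^{-1}) = 0$ unless $\vec\lambda = \ghook{n}{k}{q}$ for some $0\le q<r$ and $0\le k<n$, which is the claim.

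The step I expect to require the most care is the first reduction: making precise that the conjugacy condition $s^{-1}ws\in B$ is nonempty \emph{if and only if} the multiset of "colored cycle lengths" of $w$ can be partitioned among the $r$ blocks according to the $k_\ell$. For a general $w$ this is the standard classification of conjugacy classes in wreath products $C_r\wr\Sn_n$, but here it suffices to use only the special feature that $|c^{-1}|$ is a single $n$-cycle, which collapses all the bookkeeping: there is no nontrivial way to split one $n$-cycle, so at most one block is nonempty, and that block must have size $n$. After that, tracking the power-of-$\xi$ weight to confirm it never vanishes, and invoking Murnaghan–Nakayama, are routine. I would therefore write the first reduction carefully and keep the second one brief, pointing back to the hook discussion for $\Sn_n$.
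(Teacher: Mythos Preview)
Your proposal is correct and follows essentially the same approach as the paper: use that $|c^{-1}|$ is a long $n$-cycle to force the sum in \eqref{eq:charGr1n} to be empty unless a single block has size $n$, and then invoke the Murnaghan--Nakayama rule to reduce to hooks. The paper's proof is more terse (it does not explicitly verify that the $\xi$-weight is nonzero), but the argument is the same.
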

\begin{proof}
  Let us consider the evaluation of Formula~\eqref{eq:charGr1n} with $w=c^{-1}$. Observe that $|c^{-1}|$ is a long cycle. Thus, the cyclic group it generates acts transitively on  $\{1,2,\dots,n\}$, which implies that the sum in \eqref{eq:charGr1n} is empty unless exactly one of the numbers $k_0,\ldots,k_{r-1}$ is equal to~$n$, say $k_q$.  Moreover, the Murnaghan-Nakayama rule implies that the evaluation $\chi_{\lambda_q}(c_0)$ is zero unless $\lambda^{(k_q)}$ is a hook. We are therefore left with all the characters listed in the lemma.
\end{proof}

\step{2} By the Murnaghan-Nakayama rule and the fact that $\|c^{-1}\|=r-1$, the evaluation of~\eqref{eq:charGr1n} gives
$$
\chi_{\ghook{n}{k}{q}}(c^{-1}) = (-1)^k \xi^{-q}.
$$
Moreover, their degrees are easily determined by evaluating~\eqref{eq:charGr1n} on the identity element
$$
\dim(\ghook{n}{k}{q}) = \chi_{\hook{n}{k}}(1) = \binom{n-1}{k} \mbox{ by Lemma~\ref{lemma:hooks}}.
$$

\step{3} The set of reflections of $G(r,1,n)$ is divided into~$r$ conjugacy classes, 
$\classR = \bigcup_{\ell=0}^{r-1} \classR_\ell$, where
\begin{itemize}
\item for $1 \leq \ell <r$, $\classR_\ell$ is the set of matrices obtained from the
identity matrix by replacing one of its entries by $\xi^\ell$. Hence $\classR_\ell$ has
cardinality~$n$, for each~$\ell$.
\item $\classR_0$ is the set of matrices $M_{i,j}^{(k)}$, for $1\leq i<j\leq n$, $1\leq k \leq r$, where $M_{i,j}^{(k)}$ is obtained from the matrix of the transposition $(i,j)$ by replacing its entries in columns $i$ and $j$ by $\xi^k$ and $\xi^{-k}$, respectively.
Hence, $\classR_0$ has cardinality~$r\binom{n}{2}$.
\end{itemize}
For $0\leq \ell < r$ we let $R_\ell = \sum_{\tau \in \classR_\ell} \tau$ be the
corresponding element of the group algebra.
We need to evaluate the character $\chi_{\ghook{n}{k}{q}}$ on each of the
elements $R_\ell$.
 If $\tau \in \classR_\ell$ with $\ell > 0$, then $\|\tau\| = \ell$ and $|\tau| = \one$.
Therefore, evaluating~\eqref{eq:charGr1n} yields $\chi_{\ghook{n}{k}{q}}(\tau) = \dim(\hook{n}{k})\xi^{q\ell}$, and thus
$$
\chi_{\ghook{n}{k}{q}}(R_\ell) = 
n \dim(\hook{n}{k}) \xi^{q\ell} = 
n \dim(\ghook{n}{k}{q}) \xi^{q\ell}.
$$
If $\tau \in \classR_0$, then $\|\tau\| = 0$ and $|\tau|$ is a transposition, so by~\eqref{eq:hooktranspo} the character~\eqref{eq:charGr1n} evaluates as
$$
  \chi_{\ghook{n}{k}{q}}(\tau)
    = \chi_{\hook{n}{k}} (|\tau|)
    = \dim(\hook{n}{k}) \frac{n-2k-1}{n-1}
    = \dim(\ghook{n}{k}{q}) \frac{n-2k-1}{n-1}.
$$
Adding all contributions and letting $R$ be the sum of all reflections as in
Section~\ref{sec:representationtheory}, we obtain the evaluation of the normalized character
$$
\widetilde\chi_{\ghook{n}{k}{q}} (R) = \sum_{\ell=1}^{r-1} n \xi^{q\ell} + \frac{n r (n-2k-1)}{2}.
$$

\step{4} From Steps~1--3, Formula~\eqref{eq:frobeniusZt} for $W=G(r,1,n)$ finally rewrites as
\begin{align*}
\formula_W(t)
&= \frac{1}{|W|} 
\sum_{q=0}^{r-1}\sum_{k=0}^{n-1}
\binom{n-1}{k} (-1)^k \xi^{-q}
\exp\left(t \sum_{\ell=1}^{r-1} n \xi^{q\ell}
 +t r\frac{ n(n-2k-1)}{2}
\right)
\\
&= \frac{1}{|W|}
e^{t r\binom{n}{2}}
\left(
\sum_{q=0}^{r-1} \xi^{-q}
e^{ t \cdot \sum_{l=1}^{r-1} n \xi^{ql}}
 \right)
\left(\sum_{k=0}^{n-1}
\binom{n-1}{k}(-1)^k
e^{-t rnk}
\right)
\\
&= \frac{1}{|W|}
e^{t r\binom{n}{2}
}
\left(1-e^{-t rn}\right)^{n-1}
\left(
\sum_{q=0}^{r-1} \xi^{-q}
e^{ t \cdot \sum_{\ell=1}^{r-1} n \xi^{q\ell}}
 \right).
\end{align*}
To finish the proof of Theorem~\ref{thm:main} for the group $G(r,1,n)$, we use the following lemma.
\begin{lemma}
\label{lem:zetasum}
  Let $\xi$ be a primitive~$r$-th root of unity. Then
  $$
  \sum_{q=0}^{r-1} \xi^{-q}
  e^{ t \cdot \sum_{\ell=1}^{r-1} n \xi^{q\ell}}
  =
  e^{t(r-1)n}
  -e^{-tn}
  $$
\end{lemma}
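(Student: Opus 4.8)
The plan is to reduce the left-hand side to two elementary root-of-unity evaluations. The key observation is that the inner exponent $\sum_{\ell=1}^{r-1} n\,\xi^{q\ell}$ depends on $q$ only through whether or not $q \equiv 0 \pmod r$. Indeed, when $q=0$ we have $\xi^{q\ell}=1$ for every $\ell$, so $\sum_{\ell=1}^{r-1} n\,\xi^{q\ell} = (r-1)n$; when $1 \le q \le r-1$, the element $\xi^q$ is a nontrivial $r$-th root of unity, so $\sum_{\ell=0}^{r-1}\xi^{q\ell}=0$ and hence $\sum_{\ell=1}^{r-1} n\,\xi^{q\ell} = -n$.

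Next I would substitute these two values back into the outer sum, splitting off the $q=0$ term. That term contributes $\xi^{0} e^{t(r-1)n} = e^{t(r-1)n}$, while the remaining $r-1$ terms all carry the same exponential factor $e^{-tn}$ and contribute $e^{-tn}\sum_{q=1}^{r-1}\xi^{-q}$. Finally, since $\xi^{-1}$ is again a primitive $r$-th root of unity, $\sum_{q=0}^{r-1}\xi^{-q}=0$, so $\sum_{q=1}^{r-1}\xi^{-q}=-1$, and the whole expression collapses to $e^{t(r-1)n}-e^{-tn}$, which is exactly the claim.

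There is essentially no obstacle here: the only point requiring any care is the case distinction $q=0$ versus $q\neq 0$ when evaluating the geometric sums, together with the fact that primitivity of $\xi$ guarantees that $\xi^q$ for $q\neq 0$ is a root of unity whose powers sum to zero over a full period. Everything else is a one-line computation.
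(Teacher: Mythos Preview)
Your proof is correct and follows essentially the same route as the paper: split off the $q=0$ term, use the geometric-series identity $\sum_{\ell=0}^{r-1}\xi^{q\ell}=0$ for $1\le q\le r-1$ (valid since primitivity of $\xi$ gives $\xi^q\neq 1$), and then apply $\sum_{q=1}^{r-1}\xi^{-q}=-1$. The paper's argument is the same computation, written more tersely.
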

\begin{proof}
  If $1<q<r-1$ then $\xi^{-q}\neq 1$ and $\sum_{\ell=1}^{r-1}\xi^{q\ell} = -1$
  by summing the geometric progression (notice that here we use that $r\geq 2$). 
  We therefore obtain
  $$
    \sum_{q=0}^{r-1} \xi^{-q}
    e^{ t \cdot \sum_{\ell=1}^{r-1} n \xi^{q\ell}}
    =  \xi^0 e^{n(r-1)t} + \sum_{q=1}^{r-1} \xi^{-q} e^{-nt}
    = e^{(n-1)rt} - e^{-nt}.
  $$
\end{proof}
\begin{proof}[Proof of Theorem~\ref{thm:main} for the group $W = G(r,1,n)$ with $r\geq 2$]
  Using Lemma~\ref{lem:zetasum}, our last expression of $\formula_W(t)$ rewrites
  as
  \begin{align*}
    \formula_W(t)
    &= \frac{1}{|W|}
    e^{t r\binom{n}{2}
    }
    \left(1-e^{-t rn}\right)^{n-1}
    \left(e^{t(r-1)n}
    -e^{-t n}
    \right)\\
    &= \frac{1}{|W|}
    e^{t r\binom{n}{2} + t(r-1)n
    }
    \left(1-e^{-t rn}\right)^{n} \\
    &=
    \frac{1}{|W|}\left(e^{t\left(r\frac{n+1}{2}-1\right)}-e^{-t\left(r\frac{n-1}{2}+1\right)}\right)^{n}.
  \end{align*}
  Since we have for $W = G(r,1,n)$ that $|\classR| = r\binom{n+1}{2}-n$ and $|\classR^*| = r\binom{n}{2}+n$, this coincides with~\eqref{eq:coxetergenus} in this case.
\end{proof}

\subsection{The proof for the group \texorpdfstring{$G(r,r,n)$}{G(r,r,n)} with \texorpdfstring{$r\geq 2$}{r>=2} and \texorpdfstring{$n>2$}{n>2}}
\label{par:Grrn}

The group $G(r,r,2)$ is the dihedral group $I_2(r)$, which we have already treated in Section~\ref{sec:dihedralgroup}. We assume here that $n > 2$.
Now, the Coxeter number for $G(r,r,n)$ is $h = (n-1)r$. Let $\zeta$ be a primitive $h$-th root of unity, and let $\xi = \zeta^{n-1}$, which is a primitive~$r$-th root of unity.
The group $G(r,r,n)$ is a subgroup of $G(r,1,n)$, and we reuse the notation defined in the previous section.

\medskip

Consider the element $c=c_0\wprod (0,0,\dots,0,1,r-1) \in G(r,r,n)$, where $c_0 \in \Sn_n$ is given by the cycle $(1,2,\dots,n-1)(n)$. We claim that $c$ is a Coxeter element of $G(r,r,n)$. To see that, from the  discussion preceding Section~\ref{sec:classification}, it is enough to prove that $c$ has an eigenvalue equal to $\zeta$. This is true, since the fact that $\xi = \zeta^{n-1}$ implies that the element $c$ has $n-1$ eigenvalues given by $\big\{ \zeta,\zeta^{r+1},\zeta^{2r+1},\ldots,\zeta^{(n-2)r+1}\big\}$, while the~$n$-th eigenvalue is given by $\zeta^{(n-1)(r-1)}$.
\begin{remark}
In the rest of the proof for the group $G(r,r,n)$ we will work with the particular Coxeter element $c=c_0\wprod (0,0,\dots,0,1,r-1)$. This is enough to prove Theorem~\ref{thm:main} for \emph{all} Coxeter elements of the group $G(r,r,n)$, for the same reasons as described in Remark~\ref{rem:coxeterClasses}.
\end{remark}

\step{1}
The description of irreducible characters is now more complicated than for the group $G(r,1,n)$.
Therefore we consider here only the irreducible characters that are of interest to us in the proof of Theorem~\ref{thm:main}.
We refer to Appendix~\ref{sec:irreduciblecharactersrrn} for more information on irreducible characters of $G(r,r,n)$, and for a proof of the following lemma.
\begin{lemma}\label{lemma:step1Grr}
  Let $\chi$ be an irreducible character of $G(r,r,n)$ not vanishing on $c^{-1}$. Then $\chi$ is equal to the restriction of the character $\chi_{\vec\lambda}$ of $G(r,1,n)$ defined by $\eqref{eq:charGr1n}$, where $\vec\lambda$ is one of the following partition vectors.
  \begin{itemize}
    \item $\vec\lambda=(\parA)$ or $\vec\lambda = (\parB)$, where the dots
    denote a list of $(r-1)$ empty partitions;
    \item $\vec\lambda = (\parC)$ for some $1\leq k \leq n-3$, 
    where $\qhook{n}{k}$ denotes the \defn{quasi-hook} $\qhook{n}{k}:=
    [n-k-1,2,1^{k-1}]$, and where the dots
    denote a list of $(r-1)$ empty partitions;
    \item $\vec\lambda=(\parD)$ for some $1\leq j < r$ and $0 \leq k \leq n-2$. Here $\hook{n-1}{k}$ is a hook of size $n-1$ and the partition of size $1$ appears in position $j$. As before the  dots denote lists of empty partitions, respectively of lengths $j-1$ and $r-j-1$.
  \end{itemize}
\end{lemma}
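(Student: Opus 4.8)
The plan is to mimic the Step~1 argument used for $G(r,1,n)$, but carried out inside the ambient group $G(r,1,n)$ and then restricted. Since $G(r,r,n)$ is a normal subgroup of $G(r,1,n)$ of index $r$, Clifford theory tells us that every irreducible character of $G(r,r,n)$ appears in the restriction of some $\chi_{\vec\lambda}$, and conversely that $\chi_{\vec\lambda}$ restricted to $G(r,r,n)$ either stays irreducible or splits. So it suffices to determine for which $\vec\lambda$ the restricted character can be nonzero on $c^{-1}$, together with a bookkeeping argument (deferred to Appendix~\ref{sec:irreduciblecharactersrrn}) ensuring that no irreducible constituent is accidentally lost or double-counted. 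The key point is that $c \in G(r,r,n)$, so $\chi_{\vec\lambda}(c^{-1})$ computed by Formula~\eqref{eq:charGr1n} already detects whether any constituent survives.

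First I would analyze the sum in~\eqref{eq:charGr1n} evaluated at $w = c^{-1}$. Recall $c = c_0 \wprod(0,\dots,0,1,r-1)$ with $c_0 = (1,2,\dots,n-1)(n)$, so $|c^{-1}|$ has cycle type $[n-1,1]$: a long $(n-1)$-cycle on $\{1,\dots,n-1\}$ and a fixed point $\{n\}$. For a summand to be nonzero we need $s^{-1} c^{-1} s \in B = G(r,1,k_0)\times\cdots\times G(r,1,k_{r-1})$, which forces the block sizes $k_\ell$ to be compatible with the cycle type: the $(n-1)$-cycle must live entirely in one block, and the fixed point in one block. This gives exactly two combinatorial possibilities — either one block has size $n$ (absorbing everything) and the rest are empty, or one block has size $n-1$ (the long cycle) and another block has size $1$ (the fixed point), all others empty. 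In the first case the Murnaghan--Nakayama rule applied to $c_0$, a product of an $(n-1)$-cycle and a $1$-cycle, forces $\lambda^{(q)}$ to be obtainable by adding a single box to a hook of size $n-1$, i.e. $\lambda^{(q)} \in \{[n],[1^n],\qhook{n}{k}\}$; one then checks the quasi-hook $[n-k-1,2,1^{k-1}]$ is the generic shape, with $[n]$ and $[1^n]$ the extremes, and the range $1 \le k \le n-3$ comes from requiring the shape to be a genuine non-hook partition of $n$ with a part equal to $2$. In the second case $\lambda^{(j)} = [1]$ for the size-$1$ block and $\lambda^{(q)}$ is a hook $\hook{n-1}{k}$ of size $n-1$ (again by Murnaghan--Nakayama on the $(n-1)$-cycle), and one must track the constraint relating the positions $q$ and $j$ together with the non-vanishing of the root-of-unity factor $\xi^{q\|w_q\| + j\|w_j\|}$ coming from the entries $1$ and $r-1$ of $c$; this is what pins down that, after absorbing the position of the long cycle, the free parameter is a single index $1 \le j < r$ with $0 \le k \le n-2$.

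The main obstacle will be the second bullet's parameter ranges and the third bullet's single-index parametrization: one has to be careful that the action of the ambient group $G(r,1,n)$ by conjugation, which permutes the $r$ coordinates cyclically modulo $G(r,r,n)$, identifies the $a$~priori two positions $(q,j)$ down to one orbit parameter, and simultaneously that the root-of-unity weights from $\|c^{-1}\|$ do not kill the character. Equivalently, one checks the character $\chi_{\vec\lambda}$ of $G(r,1,n)$ is invariant (up to the linear characters of $G(r,1,n)/G(r,r,n) \cong C_r$) precisely along these orbits, so that passing to $G(r,r,n)$ loses nothing. I would handle this by writing $s^{-1} c^{-1} s \in B$ explicitly as a pair $(w_q, w_j)$ or a single $w_q$, computing $\|w_q\|$ and $\|w_j\|$ from the monomial data $(0,\dots,0,1,r-1)$, and observing the sum over the relevant coset representatives $s$ reduces to a geometric sum in $\xi$ that is nonzero exactly for the claimed $\vec\lambda$. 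The detailed verification that the restriction to $G(r,r,n)$ of these finitely many families yields precisely the irreducible characters non-vanishing on $c^{-1}$ — including the splitting behavior of restrictions — is exactly the content deferred to Appendix~\ref{sec:irreduciblecharactersrrn}, and I would simply cite it here.
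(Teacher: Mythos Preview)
Your approach via Clifford theory in the ambient group $G(r,1,n)$ is a genuine alternative to the paper's argument, and the paper explicitly acknowledges it as such (see the remark following the four-step outline in Section~\ref{par:4step}, and the aside in Appendix~\ref{sec:irreduciblecharactersrrn}). The paper instead works intrinsically in $G(r,r,n)$ using Serre's description of irreducibles of a semidirect product by an abelian group: it parametrizes the irreducibles as $\theta_{\iota,\rho}=\mathrm{Ind}_{G_\iota}^{G(r,r,n)}(\cdots)$ and shows directly that $c^{-1}$ has no $G(r,r,n)$-conjugate inside $G_\iota$ unless some entry of $\iota$ equals $n$ or $n-1$. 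This kills each individual irreducible, not merely a sum of them, and then the two surviving cases are matched with the restricted $\chi_{\vec\lambda}$'s listed in the lemma.

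Your route has a real gap exactly where you write ``bookkeeping.'' From $\chi_{\vec\lambda}(c^{-1})=0$ you cannot immediately conclude that each irreducible constituent of $\chi_{\vec\lambda}\big|_{G(r,r,n)}$ vanishes on $c^{-1}$ in the cases where the restriction splits (i.e.\ when $\vec\lambda$ has nontrivial cyclic stabilizer). You are right that the \emph{listed} $\vec\lambda$'s have trivial stabilizer since $n-1\neq 1$, so their restrictions stay irreducible; the problem is the converse direction. The cleanest fix within your framework is to observe that the $G(r,1,n)$-conjugacy class of $c^{-1}$ does not split in $G(r,r,n)$: the diagonal element $1\wprod(0,\dots,0,1)$ centralizes $c$ and has $\|\cdot\|=1$, so $C_{G(r,1,n)}(c)$ surjects onto $G(r,1,n)/G(r,r,n)$. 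Since the Clifford constituents $\psi_i$ are $G(r,1,n)$-conjugate, this forces $\psi_i(c^{-1})=\psi_1(c^{-1})$ for all $i$, and then $\sum_i\psi_i(c^{-1})=0$ gives $\psi_1(c^{-1})=0$. With this addition your argument goes through; without it, it is incomplete. The paper's intrinsic approach avoids the issue entirely.

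One smaller imprecision: in your first case, the shapes ``obtained by adding a single box to a hook of size $n-1$'' also include the proper hooks $\hook{n}{k}$ for $1\le k\le n-2$. These are excluded not by shape but because the two Murnaghan--Nakayama contributions (from the two removable corners) cancel; your ``i.e.'' skips over this.
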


\step{2} This step is encapsulated in the following lemma.
\begin{lemma}\label{lemma:step2Grr}
  The dimension and evaluation on the inverse $c^{-1}$ of the Coxeter element of the characters listed in
  Lemma~\ref{lemma:step1Grr} are given as follows.
  \begin{align*}
    \dim(\parA) = 1, &\quad \chi_{\parA}(c^{-1}) = 1,  \\
    \dim(\parB) = 1, &\quad \chi_{\parB}(c^{-1}) = (-1)^n, \\
    \dim(\parC) = \frac{(n-2-k)k}{n-1}\binom{n}{k+1}, &\quad \chi_{\parC}(c^{-1}) = (-1)^k, \\
    \dim(\parD) = n \cdot \binom{n-2}{k}, &\quad \chi_{\parD}(c^{-1}) = (-1)^k \cdot \xi^{-j}.
  \end{align*}
\end{lemma}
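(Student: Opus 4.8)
The plan is to read off everything directly from Proposition~\ref{prop:Gr1n}. By Lemma~\ref{lemma:step1Grr} each of the four characters is the restriction to $G(r,r,n)$ of an explicit character $\chi_{\vec\lambda}$ of $G(r,1,n)$; since neither the dimension nor the value at an element of $G(r,r,n)$ changes under restriction, the whole lemma reduces to evaluating the right-hand side of \eqref{eq:charGr1n} at $w=\one$ (for the dimensions) and at $w=c^{-1}$.

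\emph{Dimensions.} At $w=\one$ the constraint $s^{-1}\one s=\one\in B$ is vacuous, and since $|B|=\big(\prod_\ell k_\ell!\big)\,r^n$ the sum in \eqref{eq:charGr1n} collapses to the classical wreath-product formula $\dim(\vec\lambda)=\binom{n}{k_0,\dots,k_{r-1}}\prod_{\ell}\dim(\lambda^{(\ell)})$. For $(\parA)$ and $(\parB)$ the multinomial coefficient is $1$ and $\dim([n])=\dim([1^n])=1$; for $(\parD)$ it is $\binom{n}{n-1,1}=n$, with $\dim(\hook{n-1}{k})=\binom{n-2}{k}$ by Lemma~\ref{lemma:hooks} and $\dim([1])=1$, so $\dim(\parD)=n\binom{n-2}{k}$. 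For $(\parC)$ the multinomial coefficient is again $1$, and one computes $\dim(\qhook{n}{k})$ from the hook-length formula: the hook lengths of $[n-k-1,2,1^{k-1}]$ are $n-1,\ n-k-1,\ n-k-3,\ n-k-4,\dots,1$ along the first row, $k+1$ and $1$ in the second row, and $k-1,k-2,\dots,1$ in the first column below the second row, so their product is $\frac{(n-1)\,(n-k-1)!\,(k+1)!}{(n-2-k)\,k}$ and $\dim(\qhook{n}{k})=\frac{(n-2-k)k}{n-1}\binom{n}{k+1}$.

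\emph{Values on $c^{-1}$.} The key fact is that $|c^{-1}|=c_0^{-1}$ has cycle type $[n-1,1]$, with $n$ as its fixed point. For the three characters of the form $(\mu,0,\dots,0)$ with $\mu\vdash n$ — namely $(\parA)$, $(\parB)$, $(\parC)$ — the subgroup $B$ is all of $G(r,1,n)$, the factor $\xi^{0\cdot\|\cdot\|}$ is trivial, and every summand of \eqref{eq:charGr1n} equals $\chi_\mu(|w|)$ because $\chi_\mu$ is a class function; thus $\chi_{(\mu,0,\dots,0)}=\chi_\mu\circ|\cdot|$ and $\chi_{\parA}(c^{-1})=\chi_{[n]}(c_0^{-1})=1$, while $\chi_{\parB}(c^{-1})=\chi_{[1^n]}(c_0^{-1})$ is the sign of an $(n-1)$-cycle, i.e.\ $(-1)^{n-2}=(-1)^n$, and $\chi_{\parC}(c^{-1})=\chi_{\qhook{n}{k}}(c_0^{-1})=(-1)^k$ by the Murnaghan--Nakayama rule: the only size-$(n-1)$ border strip that can be removed from $[n-k-1,2,1^{k-1}]$ is the complement of the corner box $(1,1)$, which — precisely because $n-k-1\ge2$, i.e.\ $k\le n-3$ — is a genuine border strip spanning $k+1$ rows, hence of sign $(-1)^{(k+1)-1}$, and $\chi_{[1]}$ on a $1$-cycle is $1$. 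For the last family $(\parD)$, with $\lambda^{(0)}=\hook{n-1}{k}$ and one box in position $j$, we have $B=G(r,1,n-1)\times G(r,1,1)$, and $s^{-1}c^{-1}s\in B$ holds exactly when $|s|$ fixes $n$; for every such $s$ the summand factors as $\chi_{\hook{n-1}{k}}$ of an $(n-1)$-cycle, equal to $(-1)^k$ by Lemma~\ref{lemma:hooks}, times the scalar $\xi^{j\|w_j\|}$ of the $G(r,1,1)$-block, and tracking the root of unity attached to coordinate $n$ through $c\mapsto c^{-1}$ and conjugation by $s$ shows this scalar is $\xi^{-j}$; since there are $(n-1)!\,r^n$ admissible $s$ and $|B|=(n-1)!\,r^n$, we obtain $\chi_{\parD}(c^{-1})=(-1)^k\xi^{-j}$.

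\emph{Main obstacle.} The delicate step is the last family: one must determine exactly which conjugators $s$ contribute, identify the two diagonal blocks of $s^{-1}c^{-1}s$, and compute the root of unity $\xi^{j\|w_j\|}$ correctly while keeping the normalization $1/|B|$ straight. A secondary, purely combinatorial, point is the hook-length evaluation for $\qhook{n}{k}$ together with the verification that the complement of the corner box is the \emph{unique} removable size-$(n-1)$ border strip — which is exactly where the hypothesis $k\le n-3$ is used. The other three evaluations reduce to the trivial and sign characters of $\Sn_n$ and one application of the Murnaghan--Nakayama rule, and are immediate.
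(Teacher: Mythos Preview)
Your proof is correct and follows essentially the same route as the paper's: in both cases everything is read off from the induced-character formula~\eqref{eq:charGr1n}, using that for $\vec\lambda=(\mu,\varnothing,\dots,\varnothing)$ with $\mu\vdash n$ every conjugator contributes so that $\chi_{\vec\lambda}(w)=\chi_\mu(|w|)$, and that in the fourth family only the conjugators with $|s|(n)=n$ (equivalently $s\in B$) survive. Your write-up adds detail the paper omits---the explicit hook-length bookkeeping for $\qhook{n}{k}$ and the explicit count $(n-1)!\,r^n=|B|$ of admissible $s$---but the ideas are the same.
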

\begin{proof}
If $\lambda$ is a partition of~$n$, observe that in the sum~\eqref{eq:charGr1n}
defining $\chi_{\lambda,\dots}$, all the elements $s\in G(r,1,n)$
contribute. Therefore one has $\chi_{\lambda,\dots}(w) =
\chi_\lambda(|w|)$ for all $w\in G(r,1,n)$.
 This settles the first three cases considered in the lemma, using the
Murnaghan-Nakayama rule and the hook-length formula (the only non immediate fact here is that $\dim(\qhook{n}{k}) = \frac{(n-2-k)k}{n-1}\binom{n}{k+1}$; it follows by a simple computation using the hook-length formula, that we leave to the reader).

In the fourth case we have to be more careful about the elements $s$ contributing to the sum~\eqref{eq:charGr1n}. When evaluating $\chi_{\parD}$ on $c^{-1}$, an element $s\in G(r,1,n)$ contributes to the sum if and only if $s$ belongs to the group $B$ of block-diagonal matrices defined in Proposition~\ref{prop:Gr1n}.
From this we obtain
$$\chi_{\parD}(c^{-1}) = \chi_{\hook{n-1}{k}}((1,2,\dots,n-1))\cdot \xi^{-j},$$
which equals $(-1)^k \cdot \xi^{-j}$ by Lemma~\ref{lemma:hooks}.
When evaluating the same character on the identity, then all the elements $s\in G(r,1,n)$ contribute, so we obtain
$$
\chi_{\parD}(\one)
=
\frac{|G(r,1,n)|}{|B|} \chi_{\hook{n-1}{k}}(\one) 
=
n\dim(\hook{n-1}{k}) = n\binom{n-2}{k},
$$
where we again used Lemma~\ref{lemma:hooks}.
\end{proof}

\step{3} The set of reflections of $G(r,r,n)$ is formed of one conjugacy class,
namely the class $\classR_0$ defined in Section~\ref{par:Gr1n}. The following
lemma gives the evaluation of the normalized irreducible characters at the corresponding
group algebra element $R_0:=\sum_{\tau\in\classR_0}\tau$.
\begin{lemma}
For the characters listed in   Lemma~\ref{lemma:step1Grr}, the normalized character evaluations  
on the sum of all reflections are given by
\begin{align}
  \widetilde\chi_{\parA}(R_0) &=  r\binom{n}{2}, \label{eq:c1} \\
  \widetilde\chi_{\parB}(R_0) &= -r\binom{n}{2}, \label{eq:c2} \\
  \widetilde\chi_{\parC}(R_0) &= \frac{r(n-1)(n-2-2k)}{2}, \label{eq:c3} \\
  \widetilde\chi_{\parD}(R_0) &= \frac{r(n-1)(n-2k-2)}{2}. \label{eq:c4} 
\end{align}
\end{lemma}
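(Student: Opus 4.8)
The plan is to compute $\widetilde\chi_{\vec\lambda}(R_0)$ for each of the four families of characters in Lemma~\ref{lemma:step1Grr} by exploiting the fact that $R_0$ is (up to relabelling) the same group-algebra element that was already analyzed in Step~3 of the proof for $G(r,1,n)$. Since $\classR_0\subset G(r,r,n)$ consists exactly of the $r\binom{n}{2}$ matrices $M_{i,j}^{(k)}$, and since each such $\tau$ satisfies $\|\tau\|=0$ with $|\tau|$ a transposition, evaluating~\eqref{eq:charGr1n} at $\tau$ only involves the $\Sn_n$- (or $\Sn_{n-1}\times\Sn_1$-) character attached to the partition vector, with all the $\xi$-twist factors trivial. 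So the first step is to reduce each $\widetilde\chi_{\vec\lambda}(\tau)$ to a normalized symmetric-group character evaluated on a transposition, exactly as in the $G(r,1,n)$ computation.

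For the three single-hook-type families $[n]$, $[1^n]$, and $\qhook{n}{k}$, the character is literally $\chi_\lambda(|w|)$ for the corresponding $\Sn_n$-character (as already noted in the proof of Lemma~\ref{lemma:step2Grr}), so $\widetilde\chi_{\vec\lambda}(R_0)=r\binom{n}{2}\,\widetilde\chi_\lambda(\tau)$ where $\tau$ is a transposition. Then I would invoke Lemma~\ref{lemma:contents}: $\widetilde\chi_\lambda(\tau)=\frac{2}{n(n-1)}\sum_{x\in\content(\lambda)}x$. The content sums are immediate: $\content([n])$ gives $\binom{n}{2}$, $\content([1^n])$ gives $-\binom{n}{2}$, and for the quasi-hook $\qhook{n}{k}=[n-k-1,2,1^{k-1}]$ a short direct computation of the content multiset gives $\sum x = \binom{n-k-1}{2}+(1+0)+\big(-1-2-\dots-(k-1)\big)$, which simplifies to $\tfrac{(n-1)(n-2-2k)}{2}\cdot\tfrac{n(n-1)}{2}\big/\tfrac{n(n-1)}{2}$ — i.e. after multiplying by $\tfrac{2}{n(n-1)}\cdot r\binom{n}{2}=\tfrac{r(n-1)}{1}\cdot\tfrac12$ one recovers~\eqref{eq:c3}. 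These yield \eqref{eq:c1}, \eqref{eq:c2}, \eqref{eq:c3} directly.

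For the fourth family $\vec\lambda=(\hook{n-1}{k},\dots,1,\dots)$, the character is no longer simply an $\Sn_n$-character. Here I would split the sum $\chi_{\vec\lambda}(R_0)=\sum_{\tau\in\classR_0}\chi_{\vec\lambda}(\tau)$ according to the combinatorial type of $\tau=M_{i,j}^{(k)}$ relative to the block structure $B=G(r,1,n-1)\times G(r,1,1)$: either both $i,j\le n-1$ (the transposition lies inside the big block), or one of $i,j$ equals $n$ (the transposition swaps the big block with the singleton). In the first case, summing over conjugates $s$ with $s^{-1}\tau s\in B$ reproduces, up to the global index $\tfrac{|G(r,1,n-1)|}{|B'|}$ normalization already computed in Lemma~\ref{lemma:step2Grr}, the value $n\,\widetilde\chi_{\hook{n-1}{k}}(\tau')$ on a transposition $\tau'$ of $\Sn_{n-1}$, contributing $r\binom{n-1}{2}$ times that. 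In the second case $\tau$ is never conjugate into $B$ (a transposition involving the singleton block cannot be block-diagonalized), so these $\tau$ contribute $0$. Hence $\widetilde\chi_{\parD}(R_0)=r\binom{n-1}{2}\,\widetilde\chi_{\hook{n-1}{k}}(\tau')$, and applying~\eqref{eq:hooktranspo} with $n$ replaced by $n-1$ gives $\widetilde\chi_{\hook{n-1}{k}}(\tau')=\frac{(n-1)-2k-1}{(n-1)-1}=\frac{n-2k-2}{n-2}$, so the product is $r\binom{n-1}{2}\cdot\frac{n-2k-2}{n-2}=\frac{r(n-1)(n-2k-2)}{2}$, which is~\eqref{eq:c4}.

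The main obstacle is the bookkeeping in this last case: one must verify carefully that the transpositions $M_{n,j}^{(k)}$ genuinely contribute zero — equivalently that no conjugate of such an element lies in the block-diagonal subgroup $B$ — and that the normalization constant appearing when one restricts the sum over $s$ in~\eqref{eq:charGr1n} to those with $s^{-1}\tau s\in B$ matches the factor $n=|G(r,1,n)|/|B|$ already used in Lemma~\ref{lemma:step2Grr}, so that the dimension factors $\dim(\parD)=n\binom{n-2}{k}$ and the character value cancel correctly in the normalization. Once the claim that only the $\binom{n-1}{2}$ ``internal'' transpositions (counted $r$ times each) contribute is established, everything reduces to the already-proved Lemma~\ref{lemma:contents} and Equation~\eqref{eq:hooktranspo}, so no genuinely new computation is needed — it is purely a matter of tracking the block structure.
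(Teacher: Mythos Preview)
Your treatment of cases \eqref{eq:c1}--\eqref{eq:c3} is correct and is exactly the paper's argument: since $\chi_{\lambda,\varnothing,\dots,\varnothing}(w)=\chi_\lambda(|w|)$, everything reduces to Lemma~\ref{lemma:contents}. (Your content-sum computation for the quasi-hook is written in a garbled way, but the result is correct.)

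For case \eqref{eq:c4}, however, your argument rests on a false claim. You assert that when $\tau=M_{i,j}^{(k)}$ has $n\in\{i,j\}$, ``$\tau$ is never conjugate into $B$'' and hence contributes $0$. This is wrong: the set $\classR_0$ is a \emph{single} conjugacy class in $G(r,1,n)$, so every $\tau\in\classR_0$ has conjugates in $B$ and every $\tau$ has the \emph{same} nonzero character value $\chi_{\vec\lambda}(\tau)$. Concretely, conjugating $M_{1,n}^{(0)}$ by the permutation matrix of $(2,n)$ gives $M_{1,2}^{(0)}\in B$. The ``main obstacle'' you identify is thus not a bookkeeping detail --- the verification you propose would fail.

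The paper instead uses that $\chi_{\vec\lambda}$ is constant on $\classR_0$ and computes $\chi_{\vec\lambda}(\tau)$ for one $\tau$ by counting the $s\in G(r,1,n)$ with $s^{-1}\tau s\in B$. If $|\tau|=(i,j)$ this condition holds iff $n$ is not sent into $\{i,j\}$ by the underlying permutation of $s$; there are $r^n(n-2)(n-1)!$ such $s$, and $|B|=r^n(n-1)!$, giving $\chi_{\vec\lambda}(\tau)=(n-2)\,\chi_{\hook{n-1}{k}}(\tau')$ with $\tau'$ a transposition in $\Sn_{n-1}$. Multiplying by $|\classR_0|=r\binom{n}{2}$ and dividing by $\dim(\vec\lambda)=n\dim(\hook{n-1}{k})$ yields \eqref{eq:c4}.

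Your final formula $\widetilde\chi_{\vec\lambda}(R_0)=r\binom{n-1}{2}\,\widetilde\chi_{\hook{n-1}{k}}(\tau')$ is numerically correct, but only because of the identity $r\binom{n}{2}\cdot\frac{n-2}{n}=r\binom{n-1}{2}$, not because the boundary reflections vanish. There \emph{is} a legitimate way to arrive at the factor $r\binom{n-1}{2}$ directly: swap the double sum over $(\tau,s)$ in $\chi_{\vec\lambda}(R_0)$, and observe that for each fixed $s$ the set $\{\tau\in\classR_0: s^{-1}\tau s\in B\}$ is in bijection with $\classR_0\cap B$, which has cardinality $r\binom{n-1}{2}$. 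But that is a count over conjugates $s^{-1}\tau s$, not a split of the original $\tau$'s by their position relative to $B$; in particular, no individual $\tau$ contributes zero.
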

\begin{proof}
As already observed in the proof of Lemma~\ref{lemma:step2Grr}, for $\lambda$ a partition of~$n$ we have $\chi_{\lambda, \dots}(w) = \chi_\lambda (|w|)$ for all $w$. Therefore we have 
$\chi_{\lambda,\dots}(R_0) = r\binom{n}{2} \chi_\lambda(|\tau|)$ where $\tau$ is any element of $\classR_0$. Since $|\tau|$ is a transposition, the three first cases \eqref{eq:c1}, \eqref{eq:c2}, \eqref{eq:c3} therefore follow from Lemma~\ref{lemma:contents} (we leave to the reader the details of the computation of the sum of contents of the quasi-hook).

Now let us consider the evaluation $\chi_{\parD}(\tau)$ where $\tau \in \classR_0$. Note that $|\tau|$ is a transposition, say $|\tau| = (i,j)$.
In~\eqref{eq:charGr1n}, an element $s\in G(r,1,n)$ is such that $s^{-1}\tau s \in B$ if and only if $|s^{-1}(n)|\not\in \{i,j\}$. There are $r^n(n-2)(n-1)!$ such elements in $G(r,1,n)$. Therefore~\eqref{eq:charGr1n} evaluates as
\begin{align*}
  \chi_{\parD}(\tau)
  &= \frac{r^n(n-2)(n-1)!}{r^n (n-1)!}
  \chi_{\hook{n-1}{k}}(|\tau|)
  \\
  &= (n-2) \chi_{\hook{n-1}{k}}(|\tau|)
  \\
  &=
  (n-2k-2)\dim(\hook{n-1}{k}),
\end{align*}
where we used~\eqref{eq:hooktranspo} in the last equality. \eqref{eq:c4} then follows from the equality $\dim(\hook{n-1}{k}) = n\dim(\chi_{\parD})$.
\end{proof}

\step{4} From Steps~1--3, Formula~\eqref{eq:frobeniusZt} rewrites as
\begin{align*}
  |W|\cdot \formula_W(t)
  &=
  e^{tr\binom{n}{2}}
  +(-1)^n e^{-tr\binom{n}{2}}
  \\
  &
  + \sum_{k=1}^{n-3} \frac{(n-2-k)k}{n-1} \binom{n}{k+1} (-1)^k
  e^{tr\frac{(n-1)(n-2-2k)}{2}}
  \\
  & + \sum_{k=0}^{n-2} n\binom{n-2}{k} (-1)^k e^{tr\frac{(n-1)(n-2-2k)}{2}}
  \left(\sum_{j=1}^{r-1}\xi^{-j}\right).
\end{align*}
To simplify this expression, first note that the range of summation in the first sum may be extended to $k\in[0,n-2]$ since boundary values vanish. Then, note that $\sum_{j=1}^{r-1}\xi^{-j} = -1$.
Finally, observe that we have
$$
n \binom{n-2}{k} - \frac{(n-2-k)k}{n-1} \binom{n}{k+1} = \binom{n}{k+1}.
$$
Therefore the two sums can be merged together and we obtain
\begin{align*}
  \formula_W(t)
  &= \frac{1}{|W|} \Bigg( e^{t r\binom{n}{2}} +(-1)^n e^{-t r\binom{n}{2}} \\
  & \hspace{50pt}
  + \sum_{k=0}^{n-2} \binom{n}{k+1} (-1)^{k+1} e^{t r\frac{(n-1)(n-2k-2)}{2}} \Bigg).
\end{align*}
We can now finish the proof of Theorem~\ref{thm:main} for the group $G(r,r,n)$.
\begin{proof}[Proof of Theorem~\ref{thm:main} for the group $W = G(r,r,n)$ with $r\geq 2$ and $n>2$]
\hfill Notice that the two isolated terms in the last equality may be incorporated to the sum by extending the summation range to $k\in[-1,n-1]$.
Making the change of index $k\rightarrow k+1$ we obtain
\begin{align*}
\formula_W(t)
  &= 
    \frac{1}{|W|}\sum_{k=0}^{n} \binom{n}{k} (-1)^{k} e^{t r\frac{(n-1)(n-2k)}{2}}\\
    &=
    \frac{1}{|W|}e^{t r\frac{(n-1)}{2}} \left(1-e^{-t r(n-1)}\right)^n \\
    &=
    \frac{1}{|W|}\left(e^{t r\frac{(n-1)}{2}} - e^{-t r\frac{(n-1)}{2}}\right)^n.
\end{align*}
Since we have for $W = G(r,r,n)$ that $|\classR| = |\classR^*| = r\binom{n}{2}$, this coincides with~\eqref{eq:coxetergenus} in this case.
\end{proof}

\section{Proof of the main theorem for the exceptional groups}
\label{sec:exceptionaltypes}

We now discuss the verification of Theorem~\ref{thm:main} for the well-generated exceptional Shephard-Todd classification types.
Those were listed in Table~\ref{tab:exceptionaltypes}.
In Table~\ref{tab:exceptionaldegreescodegrees} in Appendix~\ref{app:exceptionaltypes}, we summarize more detailed information concerning reflections, degrees, codegrees, and number of irreducible representations.

The order, degrees, and codegrees are taken from~\cite{BMR1995}, while the number of irreducible representations were computed using {\tt Chevie}~\cite{CHEVIE}.
This software package does not only provide the number of irreducible representations but also the complete character tables\footnote{see \url{http://www.math.jussieu.fr/~jmichel/gap3/htm/chap073.htm} for examples and a detailed description.}.
The irreducible characters are indexed in {\tt Chevie} by their \defn{degrees} given by the dimension of the corresponding irreducible representation, $\deg(\chi) = \chi(\one)$, together with the smallest integer $k$ for which the irreducible representation $V_\chi$ occurs within the $k$-th symmetric power of the reflection representation $V$,
$$\occ{\chi} := \min\big\{ k : V_\chi \text{ is a summand of } \Sym^k(V) \big\}.$$
Moreover, in {\tt Chevie}, for each character value at a conjugacy class, a class representative within $W$ is given as a permutation of the roots for $W$.
Its matrix in $\GL(V)$ thus has rows given by the images of the simple roots.
Therefore, one can simply find the class representatives of the class $\classCox_\zeta$ given by all Coxeter elements which are $\zeta$-regular, for each primitive $h$-th root of unity  $\zeta$ (see the discussion in the paragraph preceding Section~\ref{sec:classification}). Similarly, one can find the class representatives of the conjugacy classes of reflections.
For the latter, we computed as well the class sizes.
To be more precise, we used the {\tt Sage}~\cite{sage} interface to {\tt GAP3} and {\tt Chevie}, together with the {\tt Sage} patch on reflection groups\footnote{see \url{http://trac.sagemath.org/sage_trac/ticket/11187} for further information.} to determine the conjugacy classes of Coxeter elements, and the classes of reflections, and to compute the class sizes and to evaluate the irreducible characters at these classes.

\medskip

As a backup check, we rechecked this data for the irreducible characters using the {\tt Sage} interface to {\tt GAP4} together with the {\tt GAP4} implementation of irreducible characters of permutation groups. As mentioned above, we used here the presentation of the irreducible well-generated complex reflection groups as permutation groups acting on roots.

\medskip

In Table~\ref{fig:G4example}, the computed data for the exceptional complex reflection group $G_4$ is shown as an example. The first two columns provide $\deg(\chi)$ and $\occ{\chi}$ which uniquely determine the given irreducible representation $\chi$ of $G_4$. We can read from Table~\ref{tab:exceptionaldegreescodegrees} that the Coxeter number for $G_4$ is $6$. We thus have two primitive $6$-th roots of unity, namely $\zeta = e^{2\pi i/6}$ and $\zeta^{-1} = e^{10\pi i/ 6}$. Columns $3$ and $4$ in Figure~\ref{fig:G4example} show the evaluation $\chi(c)$ and $\chi(c^{-1})$ of the irreducible characters at representatives of the two conjugacy classes $\classCox_\zeta$ and $\classCox_{\zeta^{-1}}$. The last column finally gives the evaluation $\chi(R) = \sum_{\tau \in \classR} \chi(\tau)$ of the irreducible characters at the sum of the reflections.
Evaluating the finite sum~\eqref{eq:frobeniusZt} with this data, one easily checks that  Theorem~\ref{thm:main} holds for the group $W=G_4$.

\begin{table}
  \centering
  \begin{tabular}{c | c || c | c | c}
    $\deg(\chi)$ & $\occ{\chi}$ & $\chi(c)$ & $\chi(c^{-1})$ & $\chi(R)$
    \tabularnewline
    \hline
    \hline
    $1$ & $0$ & $1$ & $1$ & $8$
    \tabularnewline
    \hline
    $1$ & $4$ & $\zeta_{3}^{2}$ & $\zeta_{3}$ & $-4$
    \tabularnewline
    \hline
    $1$ & $8$ & $\zeta_{3}$  & $\zeta_{3}^{2}$ & $-4$
    \tabularnewline
    \hline
    $2$ & $1$ & $\zeta_{3}$ & $\zeta_{3}^{2}$ & $4$
    \tabularnewline
    \hline
    $2$ & $3$ & $\zeta_{3}^{2}$ & $\zeta_{3}$ & $4$
    \tabularnewline
    \hline
    $2$ & $5$ & $1$ & $1$ & $-8$
    \tabularnewline
    \hline
    $3$ & $2$ & $0$ & $0$ & $0$
    \tabularnewline
  \end{tabular}
  \caption{Character values for $G_{4}$}
  \label{fig:G4example}
\end{table}

For the interested reader, the complete and long list of the evaluations for all exceptional well-generated complex reflection groups can be found in Tables~\ref{tab:first}--\ref{tab:last} in Appendix~\ref{app:exceptionaltypes}. To save space, we only show the evaluation of the irreducible characters at a representative $c$ of the conjugacy class $\classCox_\zeta$ for $\zeta = e^{2\pi i/h}$, and omit the other primitive $h$-th roots of unity. Nonetheless, we checked the other classes as well.

\medskip

The procedure described in this section allowed us to compute Formula~\eqref{eq:frobeniusZt} for all well-generated exceptional Shephard-Todd classification types, and for all conjugacy classes of Coxeter elements, and thus to verify the following theorem which completes the proof of Theorem~\ref{thm:main}.
\begin{theorem}
  Theorem~\ref{thm:main} holds for all well-generated exceptional Shephard-Todd classification types.
\end{theorem}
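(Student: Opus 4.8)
The plan is to reduce the statement to a finite computation and then to carry it out on a computer. By Formula~\eqref{eq:frobeniusZt}, for any fixed choice of Coxeter element $c$ the generating function $\formula_W(t)$ is a \emph{finite} sum
\[
  \formula_W(t) = \frac{1}{|W|}\sum_{\lambda\in\Lambda}\dim(\lambda)\,\chi_\lambda(c^{-1})\exp\!\Big(t\,\tfrac{\chi_\lambda(R)}{\dim(\lambda)}\Big)
\]
indexed by the irreducible characters of $W$. So it suffices, for each of the $26$ well-generated exceptional types listed in Table~\ref{tab:exceptionaltypes}, to assemble the data $\big(\dim(\lambda),\chi_\lambda(c^{-1}),\chi_\lambda(R)\big)_{\lambda\in\Lambda}$ together with the invariants $|W|$, $|\classR|$, $|\classR^*|$, $n$, to substitute them into the displayed formula, and to check that the result agrees, as an exponential polynomial in $t$, with the right-hand side $\frac{1}{|W|}\big(e^{t|\classR|/n}-e^{-t|\classR^*|/n}\big)^n$ of~\eqref{eq:coxetergenus}.

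First I would extract the character tables of all $26$ exceptional types from {\tt Chevie}~\cite{CHEVIE}, using the presentation of each group as a permutation group on its roots (cf.\ Section~\ref{sec:exceptionaltypes}); from these one reads $\dim(\lambda)=\chi_\lambda(\one)$ directly. Next I would identify, for every primitive $h$-th root of unity $\zeta$, the conjugacy class $\classCox_\zeta$ of $\zeta$-regular Coxeter elements: by the characterization of Coxeter elements in Section~\ref{sec:background}, a class of $W$ consists of Coxeter elements precisely when one (hence every) representative has a primitive $h$-th root of unity among the eigenvalues of its matrix in $\GL(V)$, which is straightforward to test on the permutation-on-roots representatives supplied by {\tt Chevie} (their matrices in $\GL(V)$ have rows given by the images of the simple roots). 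Likewise I would locate the conjugacy classes of reflections — finite-order elements whose fix space is a hyperplane — and their sizes, so that $\chi_\lambda(R)=\sum_{K\subseteq\classR}|K|\,\chi_\lambda(K)$, the sum being over the classes of reflections. The orders, degrees, and codegrees — hence $|W|=d_1\cdots d_n$, the counts $|\classR|,|\classR^*|$ via~\eqref{eq:reflectionshyperplanes}, and $h=d_n$ — are taken from~\cite{BMR1995} and recorded in Table~\ref{tab:exceptionaldegreescodegrees}.

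With all this data in hand the verification is purely mechanical: evaluate~\eqref{eq:frobeniusZt} symbolically and compare the coefficients of the exponentials $e^{ct}$, $c\in\tfrac1n\mathbb{Z}$, with the binomial expansion of the claimed product. I would run this through the {\tt Sage}~\cite{sage} interface to {\tt GAP3}/{\tt Chevie}, and, as an independent safeguard, recompute all character values through {\tt GAP4}'s character routines for permutation groups, using the same presentation on roots. It is worth performing the comparison for \emph{each} class $\classCox_\zeta$ separately, and not only for $\zeta=e^{2\pi i/h}$; this simultaneously confirms the assertion of Remark~\ref{rem:automorphism} that the count does not depend on the chosen conjugacy class of Coxeter elements.

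I do not expect any single hard step; the real obstacle is correctness and completeness of the bookkeeping. One must be certain that {\tt Chevie}'s list of irreducible characters is complete and unambiguous for each type, that the permutation-on-roots representatives are translated to the correct elements of $\GL(V)$ when the eigenvalue criterion is applied to single out $\classCox_\zeta$, and that no class of reflections (nor of $\zeta$-regular elements) is omitted or counted twice. The redundancy between {\tt Chevie} and {\tt GAP4}, together with the fact that we are checking against a sharp closed-form prediction rather than a loose bound, is what makes the conclusion trustworthy. The case of $G_4$ worked out in Table~\ref{fig:G4example} illustrates the entire procedure on the smallest example, and the complete tables of character evaluations at Coxeter classes and at the sum of reflections, from which the remaining cases are checked in the same way, are collected in Appendix~\ref{app:exceptionaltypes}.
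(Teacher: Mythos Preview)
Your proposal is correct and follows essentially the same approach as the paper: reduce the verification to the finite sum~\eqref{eq:frobeniusZt}, extract the needed character data from {\tt Chevie} (with a {\tt GAP4} cross-check), identify the Coxeter and reflection classes via the permutation-on-roots presentation and the eigenvalue criterion, and compare symbolically with the right-hand side of~\eqref{eq:coxetergenus} for every conjugacy class $\classCox_\zeta$. There is nothing to add; your description of the bookkeeping and its potential pitfalls matches the paper's own account in Section~\ref{sec:exceptionaltypes}.
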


\newpage 
\appendix
\addtocontents{toc}{\protect\setcounter{tocdepth}{1}}

\section{Irreducible characters of \texorpdfstring{$G(r,1,n)$}{G(r,1,n)} and \texorpdfstring{$G(r,r,n)$}{G(r,r,n)}.}
\label{app:A}

In this appendix we give the proofs of Proposition~\ref{prop:Gr1n} and
Lemma~\ref{lemma:step1Grr}. We do not claim much originality.
The proof of Proposition~\ref{prop:Gr1n} is easily found in the literature, see
e.g. \cite[Appendix~B]{Macdonald}.
However, for the group $G(r,r,n)$, we were unable to provide references where the
characters are described explicitly enough so that Lemma~\ref{lemma:step1Grr} could be
given as an immediate corollary\footnote{In~\cite{Stembridge}, a description of the irreducible representations of $G(r,r,n)$ is given in a way that is essentially equivalent to ours.}.
Therefore we include its proof here, and since it follows from the same general
theory and helps setting up notation, we also include the proof of Proposition~\ref{prop:Gr1n}.
Both proofs are exercises in elementary representation theory, and only use the description of $G(r,p,n)$ as the semi-direct product of $\Sn_n$ by an abelian group, see below.

\subsection{Irreducible characters of semidirect products with abelian groups}\label{par:Serre}

The semidirect product with an abelian group is a standard situation in
representation theory.
Here for convenience we recall briefly the general theory, taken verbatim from~\cite[Section~8.2]{Serre}, to which we refer for the proofs. We follow the notation in this reference, except that we denote by $K$ the group denoted by $H$ in \cite{Serre}, and by $\kappa$, instead of $h$, elements of this group.

We let $G,A,\Hgroup$ be finite groups such that $A$ is abelian, $G=A \cdot \Hgroup$, and $A
\cap \Hgroup =\{1\}$. We let $X=\mathrm{Hom}(A ,\CC^*)$ be the group of complex
irreducible characters of~$A$. The group $G$ operates on $X$ by $g x(a)
:= x(g^{-1} a g)$ for $g\in G, x\in X, a\in A$. We fix $(x_\iota)_{\iota \in X/\Hgroup}$ be a complete system
of representative of the orbits of $\Hgroup$ in $X$, and for $\iota \in X/\Hgroup$, we
let $\Hgroup_\iota$ be the subgroup of $\Hgroup$ formed by elements $\h$ such that $\h
x_\iota=x_\iota$. We let $G_\iota = A \cdot \Hgroup_\iota$ be the corresponding
subgroup of $G$.
We extend $x_\iota$ to $G_\iota$ by defining $x_\iota(a\cdot \h) = x_\iota(a)$ for $a\in A, \h \in \Hgroup_\iota$.
this defines a one-dimensional character of $G_\iota$. Now, let $\rho$ be an
irreducible representation of $\Hgroup_\iota$. By composing~$\rho$ with the projection
$G_\iota \rightarrow \Hgroup_\iota$, we obtain an irreducible representation $\tilde
\rho$ of $G_\iota$. By considering the tensor product $x_\iota \otimes \tilde\rho$
we obtain an irreducible representation of $G_\iota$.
We let
$\theta_{\iota,\rho}:=\mathrm{Ind}_{G_\iota}^G(x_\iota\otimes\tilde\rho)$ be the corresponding induced representation of $G$.
\begin{proposition}[{\cite[Proposition 25]{Serre}}]\label{prop:Serre}
The representations $\theta_{\iota,\rho}$ form a complete list of irreducible representations
of $G$. Moreover two such representations $\theta_{\iota,\rho}$ and
$\theta_{\iota',\rho'}$ are isomorphic if and only if $\iota=\iota'$ and $\rho$
is isomorphic to $\rho'$.
\end{proposition}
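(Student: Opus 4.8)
The plan is to follow the classical \emph{method of little groups} of Wigner and Mackey, that is, to run Clifford theory with respect to the abelian normal subgroup $A$. First I would take an arbitrary irreducible representation $V$ of $G$ and restrict it to $A$: since $A$ is abelian, $V=\bigoplus_{x\in X} V_x$ decomposes into $A$-isotypic components $V_x=\{v\in V : a\cdot v = x(a)\,v \text{ for all } a\in A\}$. Because $A$ is normal in $G$, every $g\in G$ carries $V_x$ isomorphically onto $V_{gx}$, so the support $\{x : V_x\neq 0\}$ is a union of $G$-orbits on $X$; as $A$ acts trivially on $X$, these are exactly $\Hgroup$-orbits. Irreducibility of $V$ forces $G$ to permute the nonzero $V_x$ transitively, hence the support is a single $\Hgroup$-orbit, which by relabelling I may take to be the orbit of $x_\iota$.

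Next I would recognize $V$ as an induced representation. Put $V_\iota:=V_{x_\iota}$; the full stabilizer of $x_\iota$ in $G$ is $G_\iota=A\cdot\Hgroup_\iota$ (elements of $A$ fix all characters, and $\h\in\Hgroup$ fixes $x_\iota$ iff $\h\in\Hgroup_\iota$), so $V=\bigoplus_{g\in G/G_\iota} g V_\iota$ and therefore $V\cong\mathrm{Ind}_{G_\iota}^G V_\iota$. Since induction sends proper submodules to proper submodules, irreducibility of $V$ implies that $V_\iota$ is an irreducible $G_\iota$-module. On $V_\iota$ the subgroup $A$ acts through the single character $x_\iota$, so $x_\iota^{-1}\otimes V_\iota$ is an irreducible $G_\iota$-module on which $A$ acts trivially, hence the pullback $\tilde\rho$ of an irreducible representation $\rho$ of $\Hgroup_\iota\cong G_\iota/A$. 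Consequently $V_\iota\cong x_\iota\otimes\tilde\rho$ and $V\cong\mathrm{Ind}_{G_\iota}^G(x_\iota\otimes\tilde\rho)=\theta_{\iota,\rho}$, which shows every irreducible representation of $G$ appears in the list.

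For the converse I would verify that each $\theta_{\iota,\rho}$ is irreducible via Mackey's irreducibility criterion applied to $G_\iota\leq G$ and $W:=x_\iota\otimes\tilde\rho$. The representation $W$ is irreducible, and for $g\in G\setminus G_\iota$ one has $A\subseteq G_\iota\cap{}^{g}G_\iota$, on which $\mathrm{Res}\,W$ and $\mathrm{Res}\,{}^{g}W$ restrict respectively to the $A$-characters $x_\iota$ and $g\,x_\iota$; these are distinct since $g\notin G_\iota=\mathrm{Stab}_G(x_\iota)$, so the two $(G_\iota\cap{}^{g}G_\iota)$-modules are disjoint and the criterion applies. For the distinctness statement, restricting $\theta_{\iota,\rho}$ to $A$ yields $\dim(\rho)$ copies of each character in the $\Hgroup$-orbit of $x_\iota$; hence $\theta_{\iota,\rho}\cong\theta_{\iota',\rho'}$ forces the two orbits, and so $\iota$ and $\iota'$, to agree. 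The $x_\iota$-isotypic component of $\theta_{\iota,\rho}$ is a $G_\iota$-submodule isomorphic to $x_\iota\otimes\tilde\rho$, so the given $G$-isomorphism restricts to $x_\iota\otimes\tilde\rho\cong x_\iota\otimes\tilde\rho'$ and, after untwisting by $x_\iota^{-1}$ and using $G_\iota/A\cong\Hgroup_\iota$, to $\rho\cong\rho'$; the reverse implication is immediate.

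I expect the only real obstacle to be the careful bookkeeping hidden in the two appeals to Mackey's machinery, namely establishing $V\cong\mathrm{Ind}_{G_\iota}^G V_\iota$ together with the irreducibility of $V_\iota$, and checking the disjointness hypothesis in the irreducibility criterion; once those are in place, the extraction of $\rho$ from $V_\iota$ and the distinctness argument are purely formal. Alternatively, completeness of the list could be confirmed a posteriori by the dimension count $\sum_{\iota,\rho}\dim(\theta_{\iota,\rho})^2=|G|$, but the Clifford-theoretic argument above seems more transparent and is the one I would write out in full.
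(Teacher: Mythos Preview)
Your proof is correct and follows the standard Clifford--Mackey ``method of little groups'' argument. Note, however, that the paper does not give its own proof of this proposition: it is quoted verbatim from \cite[Section~8.2]{Serre} with the explicit remark ``to which we refer for the proofs.'' The argument you outline is essentially the one Serre gives there, so there is nothing to compare.
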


Note that what precedes applies to the group $G=G(r,p,n)$, upon taking $\Hgroup=\Sn_n$, 
and letting
$A$ be the group of diagonal matrices of size~$n$, whose entries are~$r$-th roots
of unity $(\xi_1,\xi_2,\dots,\xi_n)$ such that the product
$\xi_1\xi_2\dots \xi_n$ is an
$(r/p)$-th root of unity.
In the remainder of this section we apply the general theory to the two cases that are of interest to us, namely $p=1$ and $p=r$.
In what follows $\xi$ is a fixed primitive~$r$-th root of unity.

\subsection{Irreducible characters for the group \texorpdfstring{$G(r,1,n)$}{G(r,1,n)}}
\label{sec:irreduciblecharactersr1n}

In the case of $G(r,1,n)$ we have $A\cong
\left(\mathbb{Z}/r\mathbb{Z}\right)^n$, and $X=\mathrm{Hom}(A,\CC^*)$ is formed of the
homomorphisms $\{x_\ell,\ell\in A\}$ given by 
$$x_\ell(a) = \xi^{ \ell_1 a_1 + \ell_2 a_2 + \dots + \ell_n a_n}$$
for $a \in A$.
The orbits of $X$ under the action of $\Hgroup=\Sn_n$ are indexed  by vectors $\iota=(i_0,i_1,\dots ,i_{r-1})$ of sum~$n$, think of $i_m$ as the number of coordinates of $\ell$ equal to $m$.
For a given vector $\iota$ there is a natural representative $x_\iota =
x_{(0^{i_0},1^{i_1},\dots)}$, and we obtain this way a complete system of representatives of $X/\Hgroup$.
The subgroup $\Hgroup_\iota$ of $\Sn_n$ formed by elements $\h$ such that $\h x_\iota =
x_\iota$ is, clearly, the group of block-diagonal matrices
$$\Hgroup_\iota=\Sn_{i_0}\times\Sn_{i_1}\times \dots \times \Sn_{i_{r-1}}.$$
The irreducible representations of this group are given by $\rho = \rho_{\lambda_0} \otimes \rho_{\lambda_1}\otimes \dots \otimes \rho_{\lambda_{r-1}}$ where $\vec{\lambda}$ is a vector of partitions as in Proposition~\ref{prop:Gr1n} and $\rho_{\lambda_m}$ is the representation of $\Sn_{i_m}$ indexed by $\lambda_m$, for $0\leq m < r$.
Note that the group $G_\iota = A. \Hgroup_\iota$ is nothing but the group $B$ of block-diagonal 
matrices defined in Proposition~\ref{prop:Gr1n}. 
With the notation of Paragraph~\ref{par:Serre}, the tensor product $x_\iota \otimes\tilde\rho
$ is a representation of $G_\iota$ of character
$$
\chi_{x_\iota\otimes \tilde\rho }(w_0,w_1\dots,w_{r-1}) = 
\prod_{\ell=0}^{r-1} \chi_{\lambda_\ell}(|w_\ell|)\xi^{\ell \cdot \|w_\ell\| }.
$$
The representation $\theta_{\iota,\rho}$ is, by definition, the induced representation $\mathrm{Ind}_{G_\iota}^G(\tilde\rho \otimes x_\iota)$, whose character is given by~\eqref{eq:charGr1n} (for general background on characters of induced representations, see  e.g. \cite[Theorem 12]{Serre}).
Therefore, Proposition~\ref{prop:Serre} becomes Proposition~\ref{prop:Gr1n} is this setting.

\subsection{Irreducible characters for the group \texorpdfstring{$G(r,r,n)$}{G(r,r,n)}}
\label{sec:irreduciblecharactersrrn}

We are going to use the general theory of Paragraph~\ref{par:Serre} to describe the irreducible representations of $G(r,r,n)$ as induced representations, but we will not go as far as in the case of $G(r,1,n)$.
More precisely, in the notation of~\ref{par:Serre}, we will describe explicitly the groups $G_\iota$, but we will not describe explicitly their irreducible representations in general.
Instead we will keep in mind that, to prove Lemma~\ref{lemma:step1Grr}, we are only interested in characters that do not vanish on the inverse $c^{-1}$ of the Coxeter element, so we will disregard any group $G_\iota$ that contains no conjugate of~$c^{-1}$. 
As a side remark, let us mention that we could have proved Lemma~\ref{lemma:step1Grr} directly using Proposition~\ref{prop:Gr1n} and classical criteria regarding restrictions of induced representations (\cite[Chapter 7]{Serre}). However the approach presented here is not longer, and it requires no more background than the one presented in Appendix~\ref{par:Serre}.

Recall that in the case of $G(r,r,n)$ the Coxeter element is $c=c_0\wr (0,\dots,0,1,r-1)$ where $c_0=(1,2,\dots,n-1)(n)$ has cycle type $[n-1,1]$.
As in Section~\ref{par:Grrn} we assume that $n > 2$.

In the case of $G(r,r,n)$ we have $A\cong \mathrm{Ker}\ \pi$ where $\pi: \left(\mathbb{Z}/r\mathbb{Z}\right)^n  \rightarrow \mathbb{Z}/r\mathbb{Z}$ is the sum-of-coordinate mapping.
Therefore $X=\mathrm{Hom}(A,\CC^*)$ is formed of the homomorphisms
$$x_\ell : a \in A \longmapsto \xi^{ \ell_1 a_1 + \ell_2 a_2 + \dots \ell_n a_n}$$
for $\ell\in (\mathbb{Z}/r\mathbb{Z})^n /\!\! \approx$, where $\ell \approx \ell'$ if and only if $\ell-\ell'$ is of the form $(z,z,\dots,z)$ for some $z\in \mathbb{Z}/r\mathbb{Z}$.
The orbits of $X$ under the action of $\Hgroup=\Sn_n$ are indexed by vectors $\iota = (i_0,i_1,\dots,i_{r-1})$ of total sum~$n$ considered up to circular permutation (think of $i_m$ as the number of coordinates of $\ell$ equal to $m$, and observe that a translation on $\ell$ by a vector of the form $(z,z,\dots,z)$ acts as a circular permutation on $\iota$).
For each such orbit we fix an arbitrary representative $x_\iota = x_{0^{i_0}1^{i_1}\dots}$ in order to obtain a complete system of representatives of $X/\Hgroup$.

Fix $\iota=(i_0,i_1,\dots, i_{r-1})$ and let $d$ be the smallest positive integer such that $i_{\ell+d}=i_\ell$ for all $\ell$, indices being taken modulo~$r$. Note that $d$ is a divisor of~$r$, say $r=md$.
We now determine the group $\Hgroup_\iota$, in the notation of~\ref{par:Serre}.
By definition, an element $\h \in \mathfrak{S}_n$ is such that $\h x_\iota = x_\iota$ if and only if
$$\h (0^{i_0},1^{i_1},\dots,(r-1)^{i_{r-1}}) =  (0^{i_0},1^{i_1},\dots,(r-1)^{i_{r-1}})
\text{ modulo $\approx$,}
$$
where $\approx$ is the equivalence relation defined above, i.e. iff there exists 
$k_\h\in \mathbb{Z}/r\mathbb{Z}$ such that
\begin{align}\label{eq:stabH}
\h (0^{i_0},1^{i_1},\dots,(r-1)^{i_{r-1}}) =
(k_\h^{i_0},(k_\h+1)^{i_1},\dots,(k_\h+r-1)^{i_{r-1}}).
\end{align}
Now, let $I_\ell$ be the integer interval $I_\ell=\llbracket i_0 + i_1 + \dots +i_{\ell-1}+1, i_0 + i_1 + \dots + i_\ell\rrbracket$ for $0\leq \ell <r$. Note that $\llbracket 1,r \rrbracket = I_0 \uplus I_1 \uplus \dots \uplus I_{r-1}$.
Then \eqref{eq:stabH} implies that for each $0\leq \ell <r$, we have:
\begin{align}\label{eq:stabH2}
\h ( I_\ell ) = I_{\ell+k_\h},
\end{align}
indices taken modulo~$r$. In particular, we have that $i_{\ell}=i_{\ell+k_\h}$ for all $\ell$, so that $k_\h$ is a multiple of $d$.

\begin{lemma}
Unless one of the entries of $\iota$ is equal to~$n$ or $n-1$, the inverse $c^{-1}$ of the Coxeter element has no conjugates in the group $G_\iota$.
\end{lemma}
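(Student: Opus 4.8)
The statement to prove is that unless one of the entries $i_\ell$ of the composition $\iota=(i_0,\dots,i_{r-1})$ equals $n$ or $n-1$, the group $G_\iota = A\cdot\Hgroup_\iota$ contains no conjugate of $c^{-1}$. The key point is that conjugacy in $G(r,r,n)$ (or even in $G(r,1,n)$) respects the cycle type of the underlying permutation: if $w\in G(r,r,n)$ and $g\in G(r,1,n)$, then $|g^{-1}wg| = |g|^{-1}|w|\,|g|$ has the same cycle type as $|w|$. Since $|c^{-1}| = |c_0^{-1}| = (n-1,1)$ as a permutation (a cycle of length $n-1$ together with a fixed point), any conjugate $w'$ of $c^{-1}$ inside $G_\iota$ must have $|w'|$ of cycle type $[n-1,1]$.

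First I would recall the structure of $\Hgroup_\iota$ obtained just above in the excerpt: using the intervals $I_0,\dots,I_{r-1}$ with $|I_\ell| = i_\ell$ and $\llbracket 1,n\rrbracket = I_0\uplus\dots\uplus I_{r-1}$, every element $\h\in\Hgroup_\iota\subset\Sn_n$ satisfies $\h(I_\ell) = I_{\ell+k_\h}$ for some fixed $k_\h\in\mathbb{Z}/r\mathbb{Z}$ (a multiple of $d$). Since $A$ is diagonal, $|w'|$ for $w'\in G_\iota$ is exactly such an $\h$. Now I would argue that a permutation $\h$ with $\h(I_\ell)=I_{\ell+k_\h}$ cannot have a cycle of length $n-1$ unless the block structure is essentially trivial. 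Indeed, if $k_\h = 0$, then $\h$ preserves each block $I_\ell$ setwise, so every cycle of $\h$ fits inside a single block; a cycle of length $n-1$ then forces some $i_\ell \geq n-1$, hence $i_\ell \in \{n-1, n\}$. If $k_\h \neq 0$, then the blocks $I_\ell$ are permuted in a nontrivial cyclic fashion of some order $m' \mid r$, $m' > 1$: the blocks split into orbits under $\ell\mapsto \ell + k_\h$, each orbit having the same common block size $j$ (since $i_{\ell+k_\h}=i_\ell$), and each cycle of $\h$ has length a multiple of the orbit length $o \geq 2$ and is contained in a union of $o$ equal-size blocks. So each cycle of $\h$ has length dividing $o\cdot j$ and in particular at most... here I must be a little careful: a cycle supported on $o$ blocks of size $j$ has length a multiple of $o$ at most $oj$. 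For this to equal $n-1$ with $n = $ (sum of block sizes) $= r j /$ something — I would instead reason: the fixed point of $\h$ (the length-$1$ cycle) lies in a single block, and that block is fixed setwise, forcing $k_\h$ to fix at least one $I_\ell$, hence (if the block is a singleton orbit) we are back in a situation that, combined with $d\mid k_\h$ and minimality of $d$, forces $k_\h \equiv 0$; otherwise the orbit of that block has size $\geq 2$ and contains the fixed point's block, but then $\h$ restricted to the other blocks in the orbit has no fixed point, yet must be compatible with a single $(n-1)$-cycle — a counting contradiction since $n-1 = |I| + (\text{sizes of the other} \geq 1 \text{ blocks in the orbit})$ would not leave room. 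The cleanest route is: the presence of a fixed point of $|w'|$ forces $\h(I_{\ell_0}) = I_{\ell_0}$ where $I_{\ell_0}$ is the block containing that point; by the interval structure and the fact that $k_\h$ acts by a uniform shift, $\h(I_\ell)=I_\ell$ for all $\ell$, i.e. $k_\h = 0$; then the $(n-1)$-cycle lies in one block, giving $i_\ell \geq n-1$.

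**Main obstacle.** The delicate step is handling the case $k_\h\neq 0$ cleanly — specifically, arguing that the existence of the fixed point of $|c^{-1}|$ forces the shift $k_\h$ to be trivial, so that $\h$ stabilizes every block and the long cycle is trapped in a single block. One has to be careful that the fixed point lies in one $I_{\ell_0}$, that $\h(I_{\ell_0})=I_{\ell_0}$ because $\h$ maps blocks to blocks bijectively with $|I_{\ell_0}|$ elements and the fixed point must map to itself, and then that $k_\h\cdot 1 \equiv k_\h \equiv 0$ follows, after which the minimality of $d$ plays no further role. Once $k_\h=0$ is established, the rest is the immediate observation that a single cycle of length $n-1$ inside $\llbracket 1,n\rrbracket = \biguplus I_\ell$ with each cycle confined to one block forces some $i_\ell \in\{n-1,n\}$, contradicting the hypothesis. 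I would present the argument in this order: (1) reduce to cycle-type of $|w'|$; (2) recall block structure of $\Hgroup_\iota$; (3) use the fixed point to kill the shift $k_\h$; (4) conclude the long cycle lives in one block.
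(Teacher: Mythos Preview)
Your proposal is correct and follows essentially the same route as the paper: reduce to the underlying permutation in $\Hgroup_\iota$, use the fixed point of $c_0^{-1}$ to force the shift $k_\h=0$, and then observe that the $(n-1)$-cycle must lie in a single block $I_\ell$, forcing some $i_\ell\in\{n-1,n\}$. The detour in your plan through the case $k_\h\neq 0$ with orbit-length divisibility is unnecessary, since the fixed-point argument you give at the end disposes of that case immediately, exactly as the paper does.
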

\begin{proof}
  First note that $c^{-1}$ has a conjugate in $G_\iota$ if and only if $c_0^{-1}=|c^{-1}|$ has a conjugate in $\Hgroup_\iota$. Since $c_0^{-1}=(n-1,n-2,\dots,1)(n)$ has a fixed point, then by~\eqref{eq:stabH2}, if such a conjugate $\h$ exists it is such that $k_\h=0$, keeping previous notation. Hence using~\eqref{eq:stabH2} again, the conjugate belongs to the group of block-diagonal matrices $\Sn_{i_0}\times\Sn_{i_1}\times\dots \times \Sn_{i_{r-1}}$.
  But since $c_0^{-1}$ has cycle type $[n-1,1]$, one of the blocks must be of size $n-1$ or~$n$, in order to fit the cycle of length $(n-1)$.
\end{proof}
Recalling the classical expression of induced characters as a sum over conjugates, see e.g. \cite[Theorem 12]{Serre}, this implies the following lemma.
\begin{lemma}
  Unless one of the entries of $\iota$ is equal to~$n$ or $n-1$, the character of the representation $\theta_{\iota,\rho}$ defined in \ref{par:Serre} vanishes on the inverse $c^{-1}$ of the Coxeter element, for any irreducible representation $\rho$ of $\Hgroup_\iota$.
\end{lemma}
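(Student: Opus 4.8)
The plan is to obtain this statement as an immediate consequence of the preceding lemma together with the classical description of the character of an induced representation. Recall that $\theta_{\iota,\rho}$ is by definition the induced representation $\mathrm{Ind}_{G_\iota}^G(x_\iota \otimes \tilde\rho)$, and that for any $w \in G$ its character is given by a sum running over the conjugates of $w$ that land in $G_\iota$, namely
$$
\chi_{\theta_{\iota,\rho}}(w) = \frac{1}{|G_\iota|} \sum_{s \in G \,:\, s^{-1} w s \in G_\iota} \chi_{x_\iota \otimes \tilde\rho}(s^{-1} w s),
$$
see e.g.~\cite[Theorem 12]{Serre}.

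First I would specialize this formula to $w = c^{-1}$. The preceding lemma asserts that, unless one of the entries of $\iota$ equals $n$ or $n-1$, the element $c^{-1}$ has no conjugate lying in $G_\iota$; equivalently, the index set $\{\, s \in G : s^{-1} c^{-1} s \in G_\iota \,\}$ of the displayed sum is empty. Hence the sum has no terms and $\chi_{\theta_{\iota,\rho}}(c^{-1}) = 0$, with no dependence on the chosen irreducible representation $\rho$ of $\Hgroup_\iota$. That is the whole argument.

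I do not anticipate any real obstacle here: the content is carried entirely by the preceding lemma, whose conclusion was phrased precisely as the emptiness of the very index set appearing in the induced-character formula, so no further work is required. The point of this lemma in the larger picture is that, combined with Proposition~\ref{prop:Serre}, it reduces the search for irreducible characters of $G(r,r,n)$ not vanishing on $c^{-1}$ to those orbits $\iota$ having an entry equal to $n$ or $n-1$; these finitely many remaining cases are then analyzed by hand, via the Murnaghan--Nakayama rule applied to the cycle type $[n-1,1]$ of $c_0^{-1}=|c^{-1}|$, to complete the proof of Lemma~\ref{lemma:step1Grr}.
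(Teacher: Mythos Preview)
Your proposal is correct and matches the paper's own argument essentially verbatim: the paper deduces this lemma in one line from the preceding lemma by invoking the induced-character formula of \cite[Theorem~12]{Serre}, exactly as you do. There is nothing to add.
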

Since we only want to prove Lemma~\ref{lemma:step1Grr}, we may thus focus only on the case when $\iota$ has a coordinate equal to~$n$ or $n-1$.
We treat both cases separately.

\medskip

In the first case, we have that $\iota = (n, 0,\dots,0)$ up to circular permutation.
The group $\Hgroup_\iota$ coincides with $\Sn_n$, and $G_\iota$ coincides with $G(r,r,n)$.
For any irreducible representation $\rho=\rho_\lambda$ of $\Sn_n$, the character of the irreducible representation $\theta_{\iota,\rho}$ on an element $w\in G(r,r,n)$ is given, simply, by the $\Sn_n$-character $\chi_\lambda(|w|)$.

It remains to determine which partitions $\lambda$ of~$n$ are such that the character $\chi_\lambda$ does not vanish on the permutation $c_0^{-1}$ of type $[n-1,1]$.
Clearly, the Murnaghan-Nakayama rule implies than $\lambda$ is a hook or a quasi-hook, since these are the only partitions containing a strip of length $n-1$.
A closer look at the case of hooks shows, moreover, that the only hooks for which the character does not vanish are~$[n]$ and $[1^n]$.
Finally, it is clear that the characters thus defined coincide with the restrictions to $G(r,r,n)$ of the representations $\chi_{[n],\dots}$, $\chi_{[1^n],\dots}$ and $\chi_{\qhook{n}{k},\dots}$ as stated in Lemma~\ref{lemma:step1Grr}.

\medskip
In the second case, we have that, up to a circular permutation, $\iota=(n-1,0,\dots,0,1,0,\dots,0)$ where the \lq\lq$1$\rq\rq\ appears in position $j$, for $1\leq j <r$.
Notice that since $n\neq 2$ we have  $n-1\neq1$ so $\iota$ is fixed by no non-trivial circular permutation. In previous notation, this means that $d=r$, hence all $\h \in \Hgroup_\iota$ are such that $k_\h=0 (mod \ r)$.  Thus by~\eqref{eq:stabH2} the group $\Hgroup_\iota$ is the group of block-diagonal matrices
$$
\Hgroup_\iota = \Sn_{n-1}\times \Sn_1.
$$ 
The group $G_\iota$ is the subgroup of $G(r,r,n)$ formed by matrices $w$
such that $|w|\in \Hgroup_\iota$.
Irreducible representations of $\Hgroup_\iota$ are indexed by partitions $\lambda$ of $n-1$, and for each such partition we may thus construct the representation $\theta_{\iota, \rho}$ of $G(r,r,n)$ as in~\ref{par:Serre}. By definition, its character is
$$
\chi(w) = \sum_{\substack{s \in G(r,r,n) \\ s^{-1}ws \in G_\iota}} \chi_{\lambda}(|w_1|)
\xi^{j \|w_2\|}
$$
where $s^{-1}ws \in B$ in the sum is denoted by $(w_1,w_2)$ where $w_1$ is the principal $(n-1)\times(n-1)$ submatrix and $w_2$ is the bottom-right entry.
Since $|c^{-1}|=c_0^{-1}$ has cycle type $[n-1,1]$, the only conjugates $s^{-1}c^{-1}s$ of $c^{-1}$ that are in $G_\iota$ are such that $|w_1|$ is an $(n-1)$-cycle.
Therefore by the Murnaghan-Nakayama rule one has $\chi(c^{-1})=0$ unless $\lambda$ is a hook of size $n-1$.
This concludes the proof of Lemma~\ref{lemma:step1Grr}, by observing that the character we just described coincides with the restriction of the $G(r,1,n)$-character $\chi_{\lambda,\dots,1,\dots}$ as in the statement of Lemma~\ref{lemma:step1Grr}.

\newpage
\captionsetup{width=.9\textwidth}
\section{Character evaluations for the exceptional groups}
\label{app:exceptionaltypes}

\small

\begin{table}[h]
  \centering
  % [inline block 0: 27 envs, 90182 chars -> data_tex | \begin{tabular}{c || c | c | c | c }     type & order & degrees & codegrees & \small \# irr.rep.\\...]


\onecolumn

\normalsize
\bibliographystyle{alpha}
\bibliography{ChapuyStump2012.bib}

\end{document}